\documentclass{amsart}
\usepackage{color}
\usepackage[left=3.2cm, right=3.2cm, top=3cm, bottom=3cm]{geometry}
\usepackage{amsmath, amssymb, amsfonts, amstext, amsthm, textcomp, comment,bbm, mathtools}
\usepackage{nccmath,enumitem, bbm, comment, lineno, braket}

  \usepackage[normalem]{ulem}
\usepackage{chngcntr}
\usepackage{enumitem}
\usepackage{float}

\newtheorem{theorem}{Theorem}[section]
\newtheorem{lemma}[theorem]{Lemma}
\newtheorem{corollary}[theorem]{Corollary}
\newtheorem{proposition}[theorem]{Proposition}

\theoremstyle{definition}
\newtheorem{definition}[theorem]{Definition}
\newtheorem{example}[theorem]{Example}
\newtheorem{claim}{Claim}
\newtheorem{conj}{Conjecture}

\theoremstyle{remark}
\newtheorem{remark}[theorem]{Remark}

\numberwithin{equation}{section}

\newcommand{\N}{\mathbb{N}}

\newcommand{\R}{\mathbb{R}}

\newcommand{\calB}{\mathcal{B}}

\DeclareMathOperator{\rank}{rank}

\usepackage[hidelinks]{hyperref}

\newcommand{\tr}{\operatorname{tr}} %

\begin{document}

\title{Riesz* Homomorphisms on the copositive Cone}

\author{Pavankumar Raickwade}
\address{Department of Mathematics, IIT Madras, India}
\curraddr{}
\email{prraickwade@gmail.com}
\thanks{} 

\author{K. C. Sivakumar}
\address{Department of Mathematics, IIT Madras, India}
\curraddr{}
\email{kcskumar@iitm.ac.in}
\thanks{} 

\subjclass[2010]{Primary 15B48, 15A86, 46A40}

\keywords{Partially ordered vector spaces, Riesz* homomorphisms, cone automorphisms, copositive matrices, linear preservers.}

\date{}

\dedicatory{}

\begin{abstract}
    For a cone $K\subseteq~\R^n$, a real symmetric matrix $A$ is called $K$-copositive if $x^\top A x\geq 0$ for every $x\in K.$ This class of matrices plays a central role in copositive optimization and linear complementarity problems. However, a complete characterization of linear maps that preserve the $K$-copositive cone is unknown, even for $K:=\R^n_+$.
    
    In this paper, we develop a new approach to copositivity preservers that uses only order-theoretic arguments. We consider a smaller class of copositivity preservers, called Riesz* homomorphisms, and develop a general technique to deduce the structure of these preservers directly from a representation theorem of Riesz* homomorphisms on $S_n$. Following are the main outcomes of this paper:
    \begin{enumerate}
        \item 
            We obtain a representation theorem for Riesz* homomorphisms on the partially ordered vector space of all real symmetric matrices endowed with the cone of all $K$-copositive matrices.
        \item 
            As a corollary of our representation theorem, we recover the main results of [Shitov, Proc. Amer. Math. Soc., 2021] and [Gowda et al., Linear Alg. Appl., 2013], providing a unified framework for studying cone automorphisms.
        \item 
            We introduce the notion of a $(K_1,K_2)$-unisigned matrix $P\in M_{m\times n}$, defined by the algebraic condition $P[K_1]\subseteq K_2\cup (-K_2)$, for cones $K_1\subseteq \R^n$ and $K_2\subseteq \R^m$. We also provide a characterization of such matrices.
        \item 
            We prove that a linear map of the standard form ($A\mapsto P^\top AP$; for $P\in M_{m\times n}$) preserves copositivity if and only if $P$ is $(K_2,K_1)$-unisigned, correcting a recent characterization of such maps preserving the $\R^n_+$-copositivity.
        
    \end{enumerate} 
\end{abstract}

\maketitle


\section{Introduction}\label{intro}

Let $M_{m\times n}$ denote the set of all $m\times n$ real matrices, and we let $M_n:=M_{n\times n}$. Let $S_n\subseteq M_n$ denote the set of all $n\times n$ real symmetric matrices. We endow $S_n$ with the inner product $\langle A, B\rangle =\tr(AB)$. Given a closed and generating cone $K\subseteq \R^n$, we consider the corresponding cone 
\[
    COP(K):=\Set{A\in S_n \ |\ \forall x\in K\colon x^\top Ax\geq 0}
\] 
of all {\it $K$-copositive matrices}. A linear map $T\colon S_n\to S_n$ is called an {\it into preserver of $K$-copositive matrices} if $K$-copositivity of $A$ implies $K$-copositivity of $T(A)$. Into preservers of $\R^n_+$-copositive matrices have been studied in \cite{shitov2019linearmappingspreservingcopositive, Furtado27072021}, whereas the group of automorphisms of the $K$-copositive matrices $Aut(COP(K))$ have been studied in \cite{GOWDA20133862}. In this context, we note the following.
\begin{enumerate}[label=\upshape(\roman*)]
    \item 
        The full characterization of into preservers of $K$-copositive matrices is still unknown, even for $K:=\R^n_+$.
    \item 
        In \cite{Furtado27072021}, the following claim and conjecture were proposed.
        \begin{claim}[{\cite[Theorem 2.2]{Furtado27072021}}]\label{claim1}
            A linear map $T\colon S_n\to S_n$, given by $A\mapsto P^\top A P$, is an into preserver of $\R^n_+$-copositivity if and only if $P$ is entry-wise nonnegative.
        \end{claim}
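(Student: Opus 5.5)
The plan is to split the stated equivalence into its two directions and to test each one against the quadratic-form definition of $COP(\R^n_+)$. Throughout I use the identity $x^\top(P^\top AP)x=(Px)^\top A(Px)$. Doing this shows that the ``if'' direction holds but the ``only if'' direction fails. So I do not expect to prove Claim~\ref{claim1} as stated. Instead I will refute it and replace it with the correct criterion announced in the abstract: $P[\R^n_+]\subseteq \R^n_+\cup(-\R^n_+)$.

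\emph{Sufficiency.} Suppose $P\geq 0$ entrywise and $A\in COP(\R^n_+)$. For every $x\geq 0$ we have $Px\geq 0$, so $x^\top P^\top APx=(Px)^\top A(Px)\geq 0$. Hence $P^\top AP$ is copositive. The same computation proves more: if $Px\in \R^n_+\cup(-\R^n_+)$ for every $x\ge 0$, then $(Px)^\top A(Px)=(\pm y)^\top A(\pm y)=y^\top Ay\geq 0$ for some $y\geq 0$. So every unisigned $P$ induces an into preserver.

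\emph{Failure of necessity.} The map $A\mapsto P^\top AP$ is unchanged when $P$ is replaced by $-P$. Taking $P=-I$ therefore gives the identity map, which preserves copositivity even though $P$ is not entrywise nonnegative. A less trivial counterexample is $P=\begin{pmatrix}1&-1\\0&0\end{pmatrix}$. This $P$ is neither nonnegative nor nonpositive, yet $x^\top P^\top APx=a_{11}(x_1-x_2)^2\geq 0$ for every copositive $A$, since $a_{11}\geq 0$. So the ``only if'' part of Claim~\ref{claim1} is false.

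\emph{The correct necessary condition.} Suppose $T(A)=P^\top AP$ is an into preserver, and suppose some $x\geq 0$ has $y=Px$ with $y_i>0>y_j$. Put $A=E_{ij}+E_{ji}$. This matrix is entrywise nonnegative, hence copositive. But $x^\top T(A)x=y^\top Ay=2y_iy_j<0$, a contradiction. Hence $P$ must be $(\R^n_+,\R^n_+)$-unisigned, and together with sufficiency this gives the corrected characterization.

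The main obstacle is conceptual rather than technical: recognising that the claim as stated cannot be proved and choosing the right replacement hypothesis. Once unisignedness is identified as the correct condition, both directions reduce to the one-line computations above.
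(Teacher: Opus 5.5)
Your proposal is correct and follows the paper's treatment. You recognise that the claim is false, refute the ``only if'' direction with a rank-one $P=uv^\top$ ($u\ge 0$, $v=(1,-1)^\top$) just as in Example~\ref{count-example}, and replace it with the $\R^n_+$-unisigned criterion of Theorem~\ref{thm::copositive_characterizartion}. The only difference is in the necessity step: your explicit copositive witness $E_{ij}+E_{ji}$ is more elementary but specific to $\R^n_+$, whereas the paper separates $Px(Px)^\top$ from $CP(K_1)=COP(K_1)^*$, so its argument covers arbitrary closed generating cones $K_1,K_2$.
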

        \begin{conj}\label{conj:1}
            For a linear map $T\colon S_n\to S_n$, $T\in Aut(COP(\R^n_+))$ if and only if there exists $P\in Aut(\R^n_+)$ such that $T(A)=P^\top A P$ for every $A\in S_n$.
        \end{conj}
   
    \item 
        Conjecture~\ref{conj:1} was settled in the affirmative in \cite{shitov2019linearmappingspreservingcopositive}.
    \item 
         More generally, for a closed and generating cone $K\subseteq \R^n$, the question of characterizing $Aut(COP(K))$ had already been investigated in \cite{GOWDA20133862}, where a solution was provided in terms of dual cones.
\end{enumerate}
Riesz spaces or vector lattices are special examples of partially ordered vector spaces where the suprema between any two vectors exist. In such a setting, Riesz homomorphisms are those maps that preserve suprema of any two elements. Recently, several generalizations of Riesz homomorphisms have been proposed and investigated over partially ordered vector spaces \cite{Haandel+1993, The_complete_Riesz_hom_preliminary}. In this context, it appears that the notion of Riesz* homomorphism is the most natural generalization of Riesz homomorphisms to partially ordered vector spaces. For more recent investigations on Riesz* homomorphisms, refer to \cite{Florian, Florian-Valentin-Anke-Janko-Onno, Florian-Anke-Janko-Onno}. 
The space $(S_n, COP(K))$ is generally not a Riesz space, and in this case the set of all positive operators coincide with the  set of all into preservers of $K$-copositive matrices. Since Riesz* homomorphisms are always positive, this naturally motivates the study of Riesz* homomorphisms on $(S_n, COP(K))$ as a means of further advancing the theory of into preservers of $K$-copositive matrices.  

The present note serves the following purposes. Let  $K_1\subseteq \R^n$ and $K_2\subseteq \R^m$ be closed and generating cones.
\begin{enumerate}[label=\upshape(\roman*)]
  \item   
        We introduce a new class of matrices, called $(K_1,K_2)$-unisigned matrices (Definition~\ref{def:unisigned}), for which we obtain a characterization result in Theorem~\ref{thm::prop(U)_char}. 
    \item 
        In Theorem~\ref{thm::cp-rank-one-non-increasing}, we first investigate maps $T\colon S_n\to S_m$ that satisfy
        \[
            T\left[\Set{uu^\top \ | \ u\in K_1}\right]\subseteq \Set{uu^\top \ | \ u\in K_2},
        \]
        using which, in Theorem~\ref{thm::Riesz*hom_on_X_m}, we obtain a representation theorem for the Riesz* homomorphisms $T\colon (S_n, COP(K_1))\to (S_n, COP(K_2))$.
    \item 
       As a corollary of our representation theorem for Riesz* homomorphisms on $(S^n, COP(K))$, as stated in Corollary~\ref{cor::to Riesz*}, we recover both Conjecture~\ref{conj:1}, which was proved in \cite{shitov2019linearmappingspreservingcopositive}, and the main result of \cite{GOWDA20133862}.
    \item 
        We have identified a gap in the proof of Claim~\ref{claim1} that invalidates the conclusion of the claim. We provide a simple counter-example (Example~\ref{count-example}). In Theorem~\ref{thm::copositive_characterizartion}, we propose and prove  the following claim as a corrected and more generalized version of Claim~\ref{claim1}.
        \begin{claim}\label{claim2}
             Let $T\colon S_n\to S_m$, be defined by $T(A)=P^\top A P,~A\in S_n.$ {Then $T$} maps $K_1$-copositive matrices to $K_2$-copositive matrices if and only if $P$ is $(K_2,K_1)$-unisigned.
        \end{claim}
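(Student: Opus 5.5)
The plan is to prove both directions directly from the definition of $COP(\cdot)$. The only tool is the separation theorem for closed convex cones; the characterization of unisigned matrices in Theorem~\ref{thm::prop(U)_char} is not needed.

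Here $T\colon S_n\to S_m$ with $T(A)=P^\top AP$ forces $P\in M_{n\times m}$. Being $(K_2,K_1)$-unisigned then means $P[K_2]\subseteq K_1\cup(-K_1)$.

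\emph{Sufficiency.} Assume $P$ is $(K_2,K_1)$-unisigned, and let $A\in COP(K_1)$ and $y\in K_2$. Then $Py=\varepsilon x$ for some $x\in K_1$ and some sign $\varepsilon\in\{1,-1\}$. Hence
\[
y^\top T(A)y=(Py)^\top A(Py)=\varepsilon^2\,x^\top Ax=x^\top Ax\ge 0,
\]
so $T(A)\in COP(K_2)$.

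\emph{Necessity.} Argue by contraposition. Suppose there is $y\in K_2$ with $z:=Py\notin K_1\cup(-K_1)$; note $z\neq 0$ since $0\in K_1$.

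Since $K_1$ is a closed convex cone and $z\notin K_1$, the separation theorem gives $a\in\R^n$ with $a^\top x\ge 0$ for all $x\in K_1$ and $a^\top z<0$.

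Likewise $-z\notin K_1$ gives $b\in\R^n$ with $b^\top x\ge 0$ on $K_1$ and $b^\top z>0$.

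Set $A:=\tfrac12(ab^\top+ba^\top)\in S_n$. Then $x^\top Ax=(a^\top x)(b^\top x)$ for every $x$. This is nonnegative on $K_1$, so $A\in COP(K_1)$. On the other hand,
\[
y^\top T(A)y=z^\top Az=(a^\top z)(b^\top z)<0,
\]
so $T(A)\notin COP(K_2)$, a contradiction.

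The only delicate point is this construction of a $K_1$-copositive matrix that is negative at a prescribed vector $z$ outside $K_1\cup(-K_1)$. It is exactly where closedness of $K_1$ enters, through the separation theorem. The product of two separating functionals handles it cleanly.

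The construction also explains the gap in Claim~\ref{claim1}: for $K_1=K_2=\R^n_+$, a matrix $P$ may send some nonnegative vectors to nonpositive ones. Such a $P$ is not entrywise nonnegative, yet the induced map still preserves copositivity.
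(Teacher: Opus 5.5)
Your proof is correct. The sufficiency direction coincides with the paper's, but your necessity argument takes a different route.

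The paper works in $S_n$. It observes that $Q:=(Px)(Px)^\top\notin CP(K_1)$. This tacitly uses that a rank-one matrix $zz^\top$ lies in $CP(K_1)$ only if $z\in K_1\cup(-K_1)$, which follows from $\text{Ext}(CP(K_1))=\Lambda$ together with Lemma~\ref{cp-lemma}. It then separates $Q$ from the closed cone $CP(K_1)$, whose dual is $COP(K_1)$. This yields some $A\in COP(K_1)$ with $\langle Q,A\rangle<0$, but not an explicit one.

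You instead separate twice in $\R^n$, both $z$ and $-z$ from $K_1$. You then write the witness down explicitly as the rank-two matrix $\tfrac12(ab^\top+ba^\top)$, whose quadratic form factors as $(a^\top x)(b^\top x)$.

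Your version is more elementary and self-contained. It needs neither the closedness of $CP(K_1)$, nor the duality between $CP(K_1)$ and $COP(K_1)$, nor the description of the extremals of $CP(K_1)$. It uses only that $K_1$ is a closed cone: no generating assumption on $K_1$, and nothing about $K_2$.

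The paper's route, in exchange, stays inside the duality framework that drives the rest of the article, with Riesz* functionals as the extremals $\Lambda$ of $CP(K)$. It also names the obstruction conceptually as $(Px)(Px)^\top\notin CP(K_1)$. Your construction in fact gives a direct proof of exactly that fact, since it exhibits an element of $COP(K_1)$ that pairs negatively with $zz^\top$.
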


\end{enumerate}

 To the best of our knowledge, the perspective of using Riesz* homomorphisms in the study of into preservers of $K$-copositive matrices has not been explored previously. The results presented in this article provide strong evidence that this approach is both natural and fruitful, opening new avenues for the study of linear preservers in ordered matrix spaces.

In Section \ref{prelims}, we recall preliminaries about partially ordered vector spaces, positive operators and Riesz* homomorphisms. In Section~\ref{Sec::uni-signed}, we talk about $K$-unisigned matrices. Section \ref{Riesz*-hom-copositive} is devoted to Riesz* homomorphisms $T\colon (S_m, COP(K_1))\to (S_n, COP(K_2))$. In Section \ref{into-preservers}, we prove Claim~\ref{claim2}, amending the flaw in Claim~\ref{claim1}.

\section{Preliminaries}\label{prelims}

\subsection{Partially ordered vector spaces}\label{prelim:partially_ordered_v.s.}

Let $X$ be a real vector space. A partial order $\leq$ on $X$ is called a {\it vector space order} if the following compatibility relations hold: 
\begin{align*}
	\forall x,y,z \in X&\colon \ x \leq y \Longrightarrow x+z\leq y+z,\\
	\forall x,y\in X, \lambda\geq 0&\colon \ x \leq y \Longrightarrow \lambda x\leq \lambda y.
\end{align*}
In this case, we call $X$ a {\it partially ordered vector space}. A subset $K\subseteq X$ is called a {\it cone} if $K$ is closed under addition, $\lambda K\subseteq K$ for every $\lambda \geq 0$, and $K\cap (-K)=\{0\}$. 
If $\leq$ is a vector space order on $X$, then  
\begin{equation} 
	\label{eq:cone}
	X_+:=\{x \in X\mid x\geq 0\}
\end{equation} is a cone. On the other hand, if $K\subseteq X$ is a cone, then a vector space order on $X$ is given by \begin{equation}\label{cone_order_relation}
    x\leq y ~:\Longleftrightarrow~  y-x\in K,
\end{equation}
and for $X_+$ as in \eqref{eq:cone}  one obtains $X_+=K$. We denote a partially ordered vector space by $(X,K)$ in case we want to specify the corresponding cone. If $X$ has a norm $\|\cdot\|$ then $(X, K, \|\cdot \|)$ is called an {\it ordered normed space}. The books \cite{cones_and_duality, Anke_book} are excellent sources for information on cones and partially ordered vector spaces. A cone $K$ is said to be {\it generating} if $X=K - K$. Note that $K-K$ equals the span of $K$. Let $X^*$ denote the algebraic dual of $X$. If $K$ is generating, then the set $K^*:=\{f\in X^*\mid f(x) \geq 0,~\forall x\in K\}$ defines a cone in $X^*$, known as the {\it dual cone} of~$K$. The following result is well known.
\begin{proposition}[{\cite[Proposition 1.5.5]{Anke_book}}]\label{prop::xinKifff(x)>0}
   Let $(X,\|\cdot\|)$ be a normed space and $K\subseteq X$ a closed cone. Then $x\in K$ if and only if $f(x)\geq 0$ for every continuous $f\in K^*$.
\end{proposition}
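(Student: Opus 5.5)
The forward implication is immediate: if $x\in K$ and $f\in K^*$ is continuous, then $f(x)\geq 0$ holds simply by the definition of the dual cone $K^*$. So the whole content of Proposition~\ref{prop::xinKifff(x)>0} lies in the converse, and I would prove it by contraposition using the Hahn--Banach separation theorem.

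Suppose $x_0\notin K$. Because $K$ is closed, convex (a cone is closed under addition and nonnegative scaling, hence convex) and nonempty (it contains $0$), while $\{x_0\}$ is compact and convex, the geometric Hahn--Banach theorem in the normed space $X$ gives a continuous linear functional $g$ and a real number $\alpha$ with
\[
  g(x_0)<\alpha\leq g(k)\quad\text{for all } k\in K .
\]
Taking $k=0$ gives $\alpha\leq 0$, hence $g(x_0)<0$. Next I claim $g(k)\geq 0$ for every $k\in K$. Indeed, if $g(k)<0$ for some $k\in K$, then $\lambda k\in K$ for all $\lambda>0$ and $g(\lambda k)=\lambda g(k)\to-\infty$, contradicting the lower bound $\alpha$. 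Therefore $g$ is a continuous element of $K^*$ with $g(x_0)<0$. This proves the contrapositive of the converse, and hence the proposition.

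The only step needing care is the choice of separation theorem. We need the strict version that separates a point from a closed convex set; this is valid in any real normed space, with no completeness assumption. The rest is the routine cone-scaling argument above. It is also worth noting that here $K^*$ is defined inside the algebraic dual, and we only use its continuous elements, which is exactly what the separating functional $g$ provides.
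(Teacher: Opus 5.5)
Your proof is correct: you strictly separate $x_0\notin K$ from the closed convex set $K$ by a continuous functional $g$, use $0\in K$ to get $g(x_0)<0$, and use invariance of $K$ under positive scaling to get $g\geq 0$ on $K$, so $g\in K^*$. The paper gives no proof of its own and simply cites Proposition 1.5.5 of the Kalauch--van Gaans book; that result is established by this same Hahn--Banach separation argument.
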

\begin{definition}
    Let $(X,K)$ be a partially ordered vector space. An element $u\in K\setminus \{0\}$ is called an {\it extremal} if, for every $0\leq z\leq u$, there exists $\lambda \in [0, 1]$ such that $z=\lambda u$. The set of all extremals of $K$ is denoted by $\text{Ext}(K)$.
\end{definition}


Let $X$ be a partially ordered vector space. For $A\subseteq X$, let
\begin{enumerate}[label=\upshape(\roman*)]
    \item 
        $A^u:=\{x\in X\mid x\geq a,~\forall a\in A\}$ denote the set of all {\it upper bounds} of $A$,
    \item 
        $A^l:=\{x\in X\mid x\leq a,~\forall a\in A\}$ denote the set of all {\it lower bounds} of $A$,
    \item 
        $A^{ul}:=(A^u)^l$ and similarly $A^{lu}:=(A^l)^u$.
\end{enumerate}

\subsection{Operators on partially ordered vector spaces}

Let $(X,K_1)$ and $(Y,K_2)$ be partially ordered vector spaces. A linear map $T\colon X\rightarrow Y$ is called
\begin{itemize}
    \item [-]
        {\it positive} if $T[K_1]\subseteq K_2$;
    \item [-]
        {\it automorphism} if $T$ is invertible and both $T,T^{-1}$ are positive;
    \item [-]
        {\it Riesz* homomorphism} if, for every nonempty finite subset $F\subseteq X$, one has 
        \[
            T[F^{ul}]\subseteq (T[F])^{ul}.
        \]
\end{itemize}
The set of all positive maps on $X$ is denoted by $\pi(K_1, K_2)$, and the set of all automorphisms is denoted by $Aut(K_1, K_2)$. If $X=Y$ and $K_1=K_2=:K$, then we simply write $Aut(K)$ instead of $Aut(K, K).$ 

\begin{remark}\label{Rem::Riesz*-composition}
The following properties of Riesz* homomorphisms are useful in our discussion. We refer the reader to \cite[Section 2.3]{Anke_book}, for a detailed study of Riesz* homomorphisms. 
    \begin{enumerate}
        \item [(i)]
            By taking $F:=\{0\}$, it follows that every Riesz* homomorphism is positive.
        \item [(ii)] 
            The composition of two Riesz* homomorphisms is again a Riesz* homomorphism (see \cite[Proposition 2.3.21]{Anke_book}).        
    \end{enumerate}
\end{remark}
\begin{remark}
    Recall that the {\it least upper bound (supremum)} of any two elements $x,y\in X$ is the unique element $z\in \{x,y\}^u\cap \{x,y\}^{ul}$. Hence, if $T\colon X\to Y$ is a Riesz* homomorphism, then $T$ preserves the supremum of two elements, whenever it exists. The converse is not true, in general, i.e. if $T[\{x,y\}^{ul}]\subseteq \{T(x), T(y)\}^{ul}$ for any $x,y\in X$, then $T$ may not be a Riesz* homomorphism. A detailed discussion on this topic can be found in \cite{Florian}.
\end{remark}
We also use the following property of Riesz* homomorphisms.

\begin{proposition}\label{bipositive-is-Riesz*}
    Let $(X,K)$ be a partially ordered vector space and $T\colon X\to X$ be a bijective linear map. Then $T\in Aut(K)$ if and only if both $T$ and $T^{-1}$ are Riesz* homomorphisms. 
\end{proposition}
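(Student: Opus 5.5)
We prove the two implications separately. The reverse implication is immediate. If $T$ and $T^{-1}$ are Riesz* homomorphisms, then both are positive by Remark~\ref{Rem::Riesz*-composition}(i) (take $F:=\{0\}$). Since $T$ is bijective, this is exactly the statement $T\in Aut(K)$.

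For the forward implication, assume $T\in Aut(K)$, so $T$ and $T^{-1}$ are positive. Because the situation is symmetric ($T^{-1}\in Aut(K)$ as well), it suffices to show that $T$ is a Riesz* homomorphism. Our main tool is the claim that an automorphism transports upper and lower bound sets exactly:
\[
    T[A^u]=(T[A])^u \quad\text{and}\quad T[A^l]=(T[A])^l \qquad\text{for every } A\subseteq X.
\]

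To prove the first identity, take $x\in A^u$. Then $x-a\in K$ for all $a\in A$, so positivity of $T$ gives $T(x)-T(a)\in K$, i.e.\ $T(x)\in (T[A])^u$. This shows $T[A^u]\subseteq (T[A])^u$. Conversely, let $y\in (T[A])^u$ and put $x:=T^{-1}(y)$. For each $a\in A$ we have $y-T(a)\in K$, and positivity of $T^{-1}$ gives $x-a\in K$. Hence $x\in A^u$, so $y=T(x)\in T[A^u]$. The identity for lower bounds follows by the same argument with the inequalities reversed.

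Applying the two identities in turn to a nonempty finite $F\subseteq X$ gives
\[
    T[F^{ul}]=T[(F^u)^l]=(T[F^u])^l=((T[F])^u)^l=(T[F])^{ul}.
\]
In particular $T[F^{ul}]\subseteq (T[F])^{ul}$, so $T$ is a Riesz* homomorphism. The same argument applied to $T^{-1}$ completes the forward implication.

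There is no real obstacle here. The only point needing care is the reverse inclusion $(T[A])^u\subseteq T[A^u]$, which is where positivity of $T^{-1}$, and not merely of $T$, is used. Note also that the argument uses neither finiteness of $F$ nor any topological assumption on $K$.
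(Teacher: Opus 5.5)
Your proof is correct and is essentially the paper's argument. The paper checks directly that $T(w)\le z$ for $w\in F^{ul}$ and $z\in (T[F])^u$: it pulls $z$ back through the positive map $T^{-1}$ and pushes the inequality $w\le T^{-1}(z)$ forward through $T$. Your set identities package exactly these two steps, and they also yield the equality $T[F^{ul}]=(T[F])^{ul}$, which is more than the inclusion that is needed.
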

\begin{proof}
    Since every Riesz* homomorphism is positive, the sufficiency is straightforward. Let $T\in Aut(K)$ and $\emptyset \neq F\subseteq X$ be finite and $w\in F^{ul}$. We aim to show that $T(w)\in T[F]^{ul}.$ Let $z\in T[F]^{u}$. Let $x\in X$ be such that $z=T(x)$. Since $T^{-1}$ is positive, we get $x\in F^u$ and so $w\leq x$. Since $T$ is positive,  $T(w)\leq T(x)=z$, proving that $T(w)\in T[F]^{ul}$. Hence $T[F^{ul}]\subseteq T[F]^{ul}$, and so $T$ is a Riesz* homomorphism. A similar argument for $T^{-1}$ gives the result.
\end{proof}

We shall use the following characterization of Riesz* functionals. For a normed space $(X, \|\cdot \|)$, let $X'$ denote the set of all bounded linear functionals on $X$. We consider the operator norm on $X'$. For $S\subseteq X'$, let $\overline{S}$ denote the norm closure of $S$ in $X'$. If $X$ is finite dimensional, then $X^*=X'$. 

\begin{proposition}[{\cite[Propositions 1.5.11, 2.3.28 and 2.5.5]{Anke_book}}]\label{prop:Extremals-are-R*}
    Let $(X,K)$ be a finite-dimensional partially ordered vector space with $K$ being closed and having nonempty interior. Then $f\in K^*$ is a Riesz* homomorphism if and only if $f\in \overline{Ext(K^*)}$.
\end{proposition}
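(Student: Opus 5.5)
The proposition is cited from the literature, so I will reconstruct the argument in three steps. First identify Riesz* functionals with functionals that preserve a simple supremum-type condition. Then show that extremals of $K^*$ satisfy it. Finally show the set is closed and that any Riesz* functional is extremal or zero.

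\textbf{Step 1: reduce to an order condition in $\R$.} A functional $f\colon X\to\R$ is Riesz* iff for every finite nonempty $F$, $f[F^{ul}]\subseteq (f[F])^{ul}$. In $\R$, $(f[F])^{ul}=(-\infty,\max f[F]]$. So the condition is
\[
    f(w)\le \max_{x\in F} f(x) \quad \text{for all } w\in F^{ul}.
\]
Since $K$ is closed with nonempty interior, $F^u$ is a nonempty closed convex set, and $F^{ul}$ is its lower set. Hence $f$ is Riesz* iff $\sup_{w\in F^{ul}} f(w)\le\max_F f$ for every finite $F$.

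\textbf{Step 2: extremals of $K^*$ are Riesz*.} Let $f\in Ext(K^*)$ and suppose some $w\in F^{ul}$ has $f(w)>\max_F f$. Consider the cone $C$ generated by $K$ together with the vectors $x-w$ with $x\in F$ shifted appropriately; more simply, argue via Hahn--Banach/Farkas in finite dimensions. Every $z\in F^u$ satisfies $z\ge w$. A separation argument then expresses the condition on $f$ as follows: $f$ lies in the closed cone generated by the "pieces" $g_1,\dots,g_k\in K^*$ (one for each $x\in F$) with $\sum g_i=f$ and $\sum_i g_i(x_i)\ge f(w)$. Concretely, the statement "$F^u\subseteq w+K$" is dualized using Proposition~\ref{prop::xinKifff(x)>0}. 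Its consequence is that every $h\in K^*$ decomposes as $h=\sum_i h_i$ with $h_i\in K^*$ and $h(w)\le\sum_i h_i(x_i)$; this is a finite-dimensional LP duality, which is where the closedness and interior assumptions are needed. For an extremal $f$, each $f_i=\lambda_i f$ with $\sum\lambda_i=1$. This gives $f(w)\le\sum\lambda_i f(x_i)\le\max_F f$, a contradiction.

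\textbf{Step 3: closure and the converse.} The Riesz* condition is preserved under norm limits, since each inequality in Step 1 is closed in $f$. Hence $\overline{Ext(K^*)}$ (which contains $0$) consists of Riesz* functionals. Conversely, let $f$ be Riesz* and nonzero, and suppose $f=g+h$ with $g,h\in K^*$ not proportional to $f$. By a separation argument, choose $x_1,x_2$ with $g(x_1)-h(x_1)$ and $g(x_2)-h(x_2)$ of opposite advantage, and use the interior to find $w\in\{x_1,x_2\}^{ul}$ with $f(w)>\max(f(x_1),f(x_2))$. A natural choice is $w$ given by a "pointwise max" in the dual sense, realized again by the LP duality of Step 2, which makes $F^{ul}$ large enough.

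\textbf{Main obstacle.} I expect the LP-duality description of $\{x_1,\dots,x_k\}^{ul}$ in terms of decompositions in $K^*$ to be the hardest part. It is the only place where the hypotheses that $K$ is closed and has nonempty interior (so that the dual cone is pointed and strong duality holds) really matter.
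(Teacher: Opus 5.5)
The paper gives no proof of this proposition; it simply cites Kalauch--van Gaans. Your direction $\overline{Ext(K^*)}\subseteq\{\text{Riesz* functionals}\}$ is sound, provided you drop the false starts in Step~2 (the cone $C$, the ``pieces'') and keep only the decomposition claim. That claim is exactly strong conic duality for $\inf\{h(z)\mid z-x_i\in K,\ 1\le i\le k\}$, whose dual is $\max\{\sum_i h_i(x_i)\mid h_i\in K^*,\ \sum_i h_i=h\}$. Slater's condition holds because $te-x_i\in\operatorname{int}K$ for $e\in\operatorname{int}K$ and $t$ large. Extremality then forces $f_i=\lambda_i f$, and your observation that the Riesz* inequalities are closed in $f$ takes care of the closure.

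The genuine gap is the converse. You set out to prove that every nonzero Riesz* functional is \emph{extremal}, and that is false in general. Your own Step~3 shows why: Riesz* functionals form a closed set containing $Ext(K^*)$, so every nonzero non-extremal point of $\overline{Ext(K^*)}$ is a Riesz* functional with a nontrivial splitting $f=g+h$. Such points exist. Let $B\subseteq\R^3$ be the convex hull of the circle $\{(x,y,0)\mid (x-1)^2+y^2=1\}$ and the segment $\{(0,0,z)\mid |z|\le1\}$, let $C\subseteq\R^4$ be the cone generated by $B\times\{1\}$, and put $K:=C^*$. Then $K$ is closed with nonempty interior and $K^*=C$. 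The vector $(0,0,0,1)=\tfrac12(0,0,1,1)+\tfrac12(0,0,-1,1)$ is not extremal, yet it is a limit of extremals $(p,1)$ with $p$ on the circle. This is exactly why the statement carries a closure, so no choice of $x_1,x_2,w$ can yield the contradiction you want.

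Your two-point plan is too weak even away from $\overline{Ext(K^*)}$. For the Lorentz cone $\{(x,y,t)\mid t\ge\sqrt{x^2+y^2}\}$, the functional $f(x,y,t)=t$ is not Riesz*. Yet $f(w)\le\max(f(x_1),f(x_2))$ for all $w\in\{x_1,x_2\}^{ul}$, because two open half-planes containing the centre of the unit disc always share a point of the boundary circle.

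What is missing is a compactness argument aimed at the closure. Fix $e\in\operatorname{int}K$, let $S=\{g\in K^*\mid g(e)=1\}$ and $E=\overline{\operatorname{ext}S}$, and suppose $f$ is Riesz* with $f(e)=1$ but $f\notin E$.
\begin{enumerate}
\item For each $g\in E$ pick $y_g$ with $g(y_g)>0>f(y_g)$. This is possible since $g\neq f$ and $g(y+ce)=g(y)+c=f(y+ce)-f(y)+g(y)$.
\item Compactness of $E$ gives finitely many $y_1,\ldots,y_k$ with $\max_i g(y_i)>0$ for every $g\in E$.
\item Every $z\in\{y_1,\ldots,y_k\}^u$ then has $g(z)>0$ on $E$, hence $g(z)\ge0$ on $S=\operatorname{conv}E$ and on $K^*=\R_+S$. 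So $z\in K$ by Proposition~\ref{prop::xinKifff(x)>0}.
\item Thus $0\in\{y_1,\ldots,y_k\}^{ul}$ while $f(0)=0>\max_i f(y_i)$, contradicting that $f$ is Riesz*.
\end{enumerate}
Here $k$ cannot in general be taken to be $2$.
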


\subsection{The cone of $K$-completely positive matrices}\label{ex::COP}

For a closed and generating cone $K\subseteq \R^n$, the set 
    \[
       CP(K):=\Set{\sum_{i=1}^n  u_iu_i^\top \ |\  u_i\in K\cup (-K)}\subseteq S_n.
    \]
    forms a closed and generating cone (cf. \cite[Proposition 5]{GOWDA20133862}), known as the cone of {\it $K$-completely positive matrices}. We identify $S_n^*$ with itself using the inner product on $S_n$, given by $\langle A,B\rangle =\tr(AB)$. It is well known (cf. \cite[Proposition~7]{GOWDA20133862}) that 
    \[
        COP(K)^*=CP(K)\quad \text{and}\quad \text{Ext}(CP(K))=\{uu^\top \mid u\in K\}=:\Lambda.
    \]
    \begin{lemma}\label{cp-lemma}
        Let $K\subseteq \R^n$ be a closed cone. The following results hold.
        \begin{enumerate}[label=\upshape(\roman*)]
            \item \label{Lambda-characterization}
                For $u\in \R^n$, one has $uu^\top\in \Lambda$ if and only if $u\in K\cup (-K)$.
            \item \label{Lambda closed}
                $\Lambda$ is a closed set.
        \end{enumerate}
    \end{lemma}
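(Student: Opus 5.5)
For part (i), the plan is a direct comparison of rank-one matrices. If $u\in K\cup(-K)$, then $uu^\top=(-u)(-u)^\top$ shows $uu^\top\in\Lambda$, since one of $u,-u$ lies in $K$. Conversely, suppose $uu^\top=vv^\top$ for some $v\in K$. If $u=0$ there is nothing to prove. Otherwise, comparing columns shows $u$ and $v$ span the same line, so $v=\lambda u$ with $\lambda\neq 0$. Then $uu^\top=\lambda^2 uu^\top$ forces $\lambda^2=1$, that is, $v=\pm u$. Hence $u=\pm v\in K\cup(-K)$. An equivalent route is to apply both sides to $u$ and take inner products with $u$ to read off $\|u\|^2u=(v^\top u)v$, which gives the same conclusion.

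For part (ii), I would take a sequence $u_ku_k^\top\in\Lambda$ with $u_k\in K$ and $u_ku_k^\top\to B$ in $S_n$. The aim is to show $B=uu^\top$ for some $u\in K$. The key step, and the only real obstacle, is extracting a convergent subsequence of the vectors $u_k$ themselves, because the map $u\mapsto uu^\top$ is not injective.

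Boundedness comes from the trace: $\|u_k\|^2=\operatorname{tr}(u_ku_k^\top)\to\operatorname{tr}(B)$, so $(u_k)$ is bounded in $\R^n$. By Bolzano--Weierstrass, some subsequence $u_{k_j}\to u$. Since $K$ is closed, $u\in K$. By continuity of $u\mapsto uu^\top$, we get $B=\lim u_{k_j}u_{k_j}^\top=uu^\top\in\Lambda$.

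Alternatively, (ii) can be phrased as follows: $\Lambda$ is the image of the closed set $K$ under the continuous map $u\mapsto uu^\top$, and this map is proper by the norm identity $\|uu^\top\|_F=\|u\|^2$. Proper continuous maps into $S_n$ have closed images, which gives the same result.
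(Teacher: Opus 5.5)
Your proof is correct, but both parts take a different route from the paper's.

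For (i), the paper argues by separation. If $u\notin K$ and $uu^\top=vv^\top$ with $v\in K$, it picks $w\in K^*$ with $u^\top w<0$, using closedness of $K$ via Proposition~\ref{prop::xinKifff(x)>0}. It then reads off $(u^\top w)u=(v^\top w)v\in K$, hence $u\in -K$. You instead prove the purely linear-algebraic fact that $uu^\top=vv^\top$ forces $v=\pm u$. This shows that (i) holds for an arbitrary cone, closed or not.

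For (ii), the paper first uses closedness of the set of rank-at-most-one matrices to write the limit as $uu^\top$ for some $u\in\R^n$, with no sign information on $u$. It then repeats the separation trick with some $v\in K^*$ and the convergence $(u_n^\top v)u_n\to(u^\top v)u$ to force $u\in K\cup(-K)$. Your trace bound $\|u_k\|^2=\tr(u_ku_k^\top)\to\tr(B)$ together with Bolzano--Weierstrass produces a limit $u\in K$ directly. This avoids the duality step entirely. It also sidesteps the small point, left implicit in the paper, that the limit is $+uu^\top$ rather than $-uu^\top$.

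Your argument is shorter and more elementary. The paper's stays within the dual-cone framework it uses throughout, and its proof of (ii) does not rely on local compactness of the ambient space. It would therefore transfer more readily to infinite-dimensional settings, where bounded sequences need not have convergent subsequences.
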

    \begin{proof}
        \ref{Lambda-characterization}: Since $uu^\top = (-u)(-u)^\top$, if $u\in K\cup (-K)$, then by definition $uu^\top\in \Lambda$. Let  $u\in \R^n\setminus K$ be such that $uu^\top \in\Lambda$, so that there exists  $v\in K$ such that $uu^\top = vv^\top$. Since $K$ is closed, by Proposition~\ref{prop::xinKifff(x)>0}, there exists $w\in K^*$ such that $u^\top w <0$. Then
        \[
            (u^\top w) u = (uu^\top) w = (vv^\top) w = (v^\top w) v\in K.
        \]
        Hence $u\in -K$.\\
        \ref{Lambda closed}: Since the set of all rank at-most one matrices is a closed set, if $\{u_n\mid n\in \N\}\subseteq K$, is such that $u_nu_n^\top\to A$, then $A=uu^\top$ for some $u\in \R^n$. If $u\in K$, then we are done. If $u\notin K$, then, by Proposition~\ref{prop::xinKifff(x)>0}, there exist $v\in K^*$ such that $u^\top v <0$. Since the map $(A\mapsto Av)\colon S_n\to \R^n$ is continuous, we get
        \[
            K\ni (u_n^\top v)u_n = (u_nu_n^\top)v \to (uu^\top)v= (u^\top v)u.
        \]
        Since $K$ is closed, we get $u\in (-K)$. In any case, we conclude that $u\in K\cup (-K)$, and so $uu^\top\in \Lambda$ proving that $\Lambda$ is closed.
    \end{proof}
     Hence, by Proposition~\ref{prop:Extremals-are-R*}, the set $\Lambda$, identified with the functionals ($A\mapsto u^\top Au$), consists of all the Riesz* functionals on $(S_n, COP(K)).$

\section{Unisigned matrices}\label{Sec::uni-signed}

In this section, we introduce and characterize a special class of matrices which will play an important role in this article. We start with an auxiliary result. 
\begin{lemma}\label{lemma::boundary_lemma}
    Let $A\subseteq X$ be a closed and convex subset of a normed space $(X,\|\cdot\|)$. Let $u\in A$ and $v\notin A$. Consider the map $g\colon [0,1]\to X$, defined as $g(t)=(1-t)u+tv$ for $t\in [0,1]$. Then there exists $c\in [0,1)$ such that $g(t)\in A$ if and only $t\in [0,c].$
\end{lemma}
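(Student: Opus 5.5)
The plan is to look at the preimage $I:=g^{-1}[A]=\{t\in[0,1]\mid g(t)\in A\}$ and show that it is a closed interval of the form $[0,c]$ with $c<1$. This takes three observations: $I$ is convex, $I$ is closed, and $I$ contains $0$ but not $1$.

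First, convexity. The map $g$ is affine, so for $s,t\in I$ and $\lambda\in[0,1]$ we have
\[
g(\lambda s+(1-\lambda)t)=\lambda g(s)+(1-\lambda)g(t).
\]
The right-hand side lies in $A$ because $A$ is convex. Hence $I$ is a convex subset of $[0,1]$, that is, an interval.

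Second, closedness. The map $g$ is continuous from $[0,1]$ into $(X,\|\cdot\|)$, since
\[
\|g(t)-g(s)\|=|t-s|\,\|v-u\|.
\]
As $A$ is closed, $I=g^{-1}[A]$ is closed in $[0,1]$, and therefore closed in $\R$.

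Third, the endpoints. We have $0\in I$ because $g(0)=u\in A$, and $1\notin I$ because $g(1)=v\notin A$. So $I$ is a nonempty closed interval containing $0$. Put $c:=\sup I$. Closedness gives $c\in I$, so $c\neq 1$ and $c\in[0,1)$. Since $I$ is an interval containing both $0$ and $c$, we conclude $I=[0,c]$. This is exactly the statement: $g(t)\in A$ if and only if $t\in[0,c]$.

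There is no real obstacle in this argument. The only points needing care are using closedness to get $c\in I$, which rules out a half-open interval $[0,c)$, and using $v\notin A$ to get $c<1$.
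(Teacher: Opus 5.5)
Your proof is correct and follows essentially the same route as the paper: closedness of $g^{-1}(A)$ comes from continuity of $g$, convexity of $A$ makes this preimage an interval starting at $0$, and $v\notin A$ forces $c<1$. The paper sets $c:=\inf g^{-1}(A^c)$ and uses openness of $g^{-1}(A^c)$, where you use $c:=\sup g^{-1}(A)$ and closedness; this difference is only cosmetic.
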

\begin{proof}
    First, observe that $g$ is continuous. Set $c:=\inf g^{-1}(A^c)$. Note that $1 \in g^{-1}(A^c)$ and $g^{-1}(A^c)$ is an open set. Hence $c<1$. Clearly, $g([0,c))\subseteq A$, and since $g^{-1}(A)$ is closed, we get $g([0,c])\subseteq A$. Notice that, for any $t\in (0,1)$ and $\lambda\in (0, t)$, $g(\lambda)=(1-\frac{\lambda}{t})u+\frac{\lambda}{t}g(t)$. Since $A$ is convex and $u\in A,$ if $g(t)\in A$, then $g((0,t))\subseteq A$. Hence, $g((c,1])\subseteq A^c$. This completes the proof.
\end{proof}
\begin{remark}\label{rem::boundary-lemma}
    Observe that, in Lemma~\ref{lemma::boundary_lemma}, $g$ is a directed line segment from $u$ to $v$. If we change the orientation of $g$, i.e., define $g(t):=tu + (1-t)v$ for $t\in [0,1]$, then there exists $c\in (0,1]$ such that $g(t)\in A$ if and only $t\in [c,1]$.
\end{remark}

\begin{definition}\label{def:unisigned}
    Let $K_1\subseteq \R^n$ and $K_2\subseteq \R^m$ be cones. A matrix $P\in M_n$ is called {\it $(K_1,K_2)$-unisigned} if, for every $x\in K_1$, one has $Px\in K_2\cup (-K_2)$, i.e., $P[K_1]\subseteq K_2\cup (-K_2).$
\end{definition}
When $K_1=K_2=:K$, we simply write $K$-unisigned instead of $(K,K)$-unisigned. The nomenclature is motivated by the case $K_1=\R^n_+$ and $K_2:=\R^m_+$. In this case, $A\in M_{m\times n}$ is $(\R^n_+,\R^m_+)$-unisigned if and only if no column of $A$ has entries with different signs. In the next result, we characterize $(K_1,K_2)$-unisigned matrices under some natural assumptions on cones $K_1$ and $K_2$. Recall that, for cones $K_1\subseteq \R^n$ and $K_2\subseteq \R^m$, $\pi(K_1, K_2):=\{A\in M_{m\times n}\mid A[K_1]\subseteq K_2\}$.

\begin{theorem}\label{thm::prop(U)_char}
    Let $K_1\subseteq \R^n$ be a generating cone and $K_2\subseteq \R^m$ be a closed cone, and let $P\in M_{m\times n}$. Then, $P$ is $(K_1, K_2)$-unisigned if and only if one of the following holds.
    \begin{enumerate}[label=\upshape(\roman*)]
        \item 
            $P\in \pi(K_1, K_2)\cup (-\pi(K_1, K_2))$
        \item 
             $P=uv^\top$ with $u\in K_2\cup (-K_2)$ and $v\in \R^n$.
    \end{enumerate}
\end{theorem}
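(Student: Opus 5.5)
Sufficiency is immediate. If $P\in\pi(K_1,K_2)$ then $P[K_1]\subseteq K_2$, and if $P\in-\pi(K_1,K_2)$ then $P[K_1]\subseteq -K_2$. If $P=uv^\top$ with $u\in K_2\cup(-K_2)$, then for $x\in K_1$ we have $Px=(v^\top x)u$, a scalar multiple of $u$. Every scalar multiple of $u$ lies in $K_2\cup(-K_2)$, since $\lambda K_2\subseteq K_2$ for $\lambda\ge 0$ and $\lambda K_2\subseteq -K_2$ for $\lambda\le 0$.

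For necessity, assume $P[K_1]\subseteq K_2\cup(-K_2)$ and $P\notin\pi(K_1,K_2)\cup(-\pi(K_1,K_2))$. Then there are $x,y\in K_1$ with $Px\notin K_2$ and $Py\notin -K_2$. Unisignedness forces $Px\in -K_2\setminus\{0\}$ and $Py\in K_2\setminus\{0\}$. The key step is to show that $Px$ and $Py$ are collinear. Consider the segment $g(t)=(1-t)y+tx\subseteq K_1$, so $Pg(t)=(1-t)Py+tPx\in K_2\cup(-K_2)$ for all $t$. Apply Lemma~\ref{lemma::boundary_lemma} to the closed convex set $A=K_2$ with $u=Py\in K_2$ and $v=Px\notin K_2$: there is $c\in[0,1)$ with $Pg(t)\in K_2$ exactly for $t\in[0,c]$. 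By Remark~\ref{rem::boundary-lemma} applied to $-K_2$, there is $d\in(0,1]$ with $Pg(t)\in -K_2$ exactly for $t\in[d,1]$. Since every $t$ lies in one of the two sets, $[0,c]\cup[d,1]=[0,1]$, so $d\le c$. Hence $w:=Pg(c)\in K_2\cap(-K_2)=\{0\}$. This says $(1-c)Py=-cPx$ with $c\in(0,1)$; here $c\neq0$ because $Py\neq0$. So $Px=-\mu Py$ for some $\mu>0$.

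Next, fix $u:=Py$ and show $P[K_1]\subseteq\R u$. Take an arbitrary $z\in K_1$ with $Pz\ne0$. If $Pz\in K_2$, pair $z$ with $x$, since $Px\notin K_2$. If $Pz\in -K_2$, pair $z$ with $y$, since $Py\notin -K_2$. The argument above, run with the roles appropriately renamed, then shows that $Pz$ is collinear with $Px$ or with $Py$, and hence with $u$ in either case. Because $K_1$ is generating, $\R^n=\operatorname{span}K_1$, so $P[\R^n]\subseteq\R u$ and $P$ has rank at most one. Thus $P=uv^\top$ for some $v\in\R^n$, where $v^\top$ is defined by $Pz=(v^\top z)u$. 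Finally $u=Py\in K_2$, which gives case (ii).

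The main obstacle is the collinearity step. It needs closedness of $K_2$ through Lemma~\ref{lemma::boundary_lemma}, and it needs the pointedness $K_2\cap(-K_2)=\{0\}$ to conclude that the crossing point is zero. The remaining steps are bookkeeping.
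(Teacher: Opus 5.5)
Your proof is correct and follows essentially the same route as the paper. Both arguments take a segment in $K_1$ joining a point with image in $K_2\setminus\{0\}$ to one with image in $-K_2\setminus\{0\}$, apply Lemma~\ref{lemma::boundary_lemma} and Remark~\ref{rem::boundary-lemma} to $K_2$ and $-K_2$, and use $K_2\cap(-K_2)=\{0\}$ to force the crossing point to be $0$. The difference is only in organization: the paper extends $\{x,y\}$ to a basis of $\R^n$ inside $K_1$ and argues by contradiction from $\rank(P)\ge 2$, whereas you prove collinearity directly and route every $Pz$ through the fixed anchors $Px$ and $Py$, which avoids the basis and the case analysis.
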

\begin{proof}
    Sufficiency is straightforward. Suppose that $P$ is $(K_1, K_2)$-unisigned and $P\notin \pi(K_1, K_2)\cup (-\pi(K_1, K_2))$, i.e., we have
    \begin{align}
        P[K_1]&\subseteq K_2\cup (-K_2),\label{1}\\
        P[K_1]&\not\subseteq K_2,\label{2}\\
        P[K_1]&\not\subseteq -K_2.\label{3}
    \end{align}
    From \eqref{1} and \eqref{3}, there exists $x\in K_1$ such that $0\neq Px\in K_2$. Similarly, by \eqref{1} and \eqref{2}, there exists $y\in K_1$ such that $0\neq Py\in -K_2$. Clearly $x$ and $y$ are linearly independent. Since $K_1$ is generating, we have $\R^n=\text{span}(K_1).$ Let $\calB:=\{x,y, u_1,\ldots, u_k\}\subseteq K_1$ be a basis of~$\R^n.$ Let 
    \[
        B_1:=\{z\in \calB\mid Pz\in K_2\} \quad \text{and}\quad \calB_2:=\{z\in \calB\mid Pz\in -K_2\}.
    \]
By \eqref{1}, it is clear that $\calB=\calB_1\cup \calB_2$. Also, $x\in \calB_1$ and $y\in \calB_2$. Suppose that $\text{rank}(P) \geq 2.$ Then there exist at least two linearly independent vectors in $P[\calB]$. We claim that there exist $z_1\in \calB_1$ and $z_2\in \calB_2$ such that $\{Pz_1, Pz_2\}$ is linearly independent. For, if $Px$ and $Py$ are independent, then this holds. Otherwise, there exists $u_i\in \calB$ such that $Pu_i$ and $Px$ are linearly independent. Consequently $Pu_i$ and $Py$ are also linearly independent. If $u_i\in \calB_1$, then choose $z_1:=u_i$ and $z_2:=y$. If $u_i\in \calB_2$, then choose $z_1:=x$ and $z_2:=u_i$.
    
Consider the map $g\colon [0,1]\to \R^n$, given by $g(t):=(1-t)z_1+tz_2$ for $t\in [0,1]$. Since $z_1, z_2\in K_1$, we get $g([0,1])\subseteq K_1$. Thus, by \eqref{1}, for any $t\in [0,1],$
    \begin{equation}\label{eq::3.1}
        P\circ g(t) = (1-t)Pz_1 + t Pz_2\in K_2\cup(-K_2).
    \end{equation}
     Also, $P\circ g(0)=Pz_1\in K_2$ and $P\circ g(1)=Pz_2\in -K_2$. Thus, by  Lemma~\ref{lemma::boundary_lemma} applied to $K_2$, we get $c_1\in [0,1)$ such that
     \[
        P\circ g(t)\in K_2 ~\Longleftrightarrow~ t\in [0,c_1],
     \]
     and Remark~\ref{rem::boundary-lemma} applied to $-K_2$ gives $c_2\in (0,1]$ such that
     \[
        P\circ g(t)\in -K_2 ~\Longleftrightarrow~ t\in [c_2,1].
     \]
     We claim that $[0,c_1]\cap [c_2, 1]=\emptyset$. Suppose on the contrary that $c\in [0,c_1]\cap [c_2, 1]$. Then $P\circ g(c)\in K_2\cap (-K_2)=\{0\}$, implying that $\{Pz_1, Pz_2\}$ is linearly dependent, a contradiction. Thus, $[0,c_1]\cap [c_2, 1]=\emptyset$ and so $c_1<c_2$. This, in turn, implies that $P\circ g(t)\notin K_2\cup (-K_2)$ for all $t\in (c_1,c_2)$, a  contradiction to \eqref{eq::3.1}. Therefore, the rank of $P$ is atmost $1$, and hence $P=uv^\top$ for some $u\in \R^m$ and $v \in\R^n$. Since $P$ is $(K_1, K_2)$-unisigned, we get $u\in K_2\cup(-K_2)$.
\end{proof}

\begin{remark}
   One may define a $(K_1,K_2)$-unisigned operator between  partially ordered vector spaces $(X,K_1)$ and $(Y,K_2)$ in an analogous manner. Consequently, Theorem~\ref{thm::prop(U)_char} admits the following more general formulation. Its proof proceeds along the same lines as that of Theorem~\ref{thm::prop(U)_char}, replacing the finite set $\calB$ with a Hamel basis. We omit it, as it is not needed for the purposes of this article.
    \begin{theorem}
    Let $(X,\|\cdot \|_1, K_1)$ and $(Y,\|\cdot \|_2, K_2)$ be ordered normed spaces with $K_1$ being generating and $K_2$ being closed. A linear map $P\colon X\to Y$ satisfies $P[K_1]\subseteq K_2\cup(-K_2)$ if and only if one of the following holds:
    \begin{enumerate}[label=\upshape(\roman*)]
        \item\label{unisignes:i}
            $P\in \pi(K_1, K_2)\cup (-\pi(K_1,K_2))$ 
        \item 
            there exists $u\in K_2\cup(-K_2)$ and $f\in X'$ such that $P(x)=f(x) u$, for every $x\in X$.
    \end{enumerate}
\end{theorem}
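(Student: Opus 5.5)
Sufficiency is immediate. If $P\in\pi(K_1,K_2)$, then $P[K_1]\subseteq K_2$; if $P\in-\pi(K_1,K_2)$, then $P[K_1]\subseteq -K_2$. If $P(x)=f(x)u$ with $u\in K_2\cup(-K_2)$, then each $P(x)$ is a real multiple of $u$, hence lies in $K_2\cup(-K_2)$.

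For necessity, I will follow the proof of Theorem~\ref{thm::prop(U)_char} step by step, now in infinite dimensions. Assume $P[K_1]\subseteq K_2\cup(-K_2)$ but $P\notin \pi(K_1,K_2)\cup(-\pi(K_1,K_2))$.

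First, as in the finite-dimensional proof, pick $x,y\in K_1$ with $0\neq Px\in K_2$ and $0\neq Py\in -K_2$. Since $K_1$ is generating, $X=\operatorname{span}K_1$. Zorn's lemma then gives a Hamel basis $\calB\subseteq K_1$ containing $x$ and $y$: extend $\{x,y\}$ to a maximal linearly independent subset of $K_1$, which spans $\operatorname{span}K_1=X$. Split $\calB=\calB_1\cup\calB_2$ by the sign of $Pz$.

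Next, suppose $\dim P[X]\geq 2$. Then $P[\calB]$ spans $P[X]$, so the same case analysis as before yields $z_1\in\calB_1$ and $z_2\in\calB_2$ with $Pz_1,Pz_2$ linearly independent. Consider the segment $t\mapsto (1-t)Pz_1+tPz_2$. This lies in a two-dimensional subspace, and $K_2$ is closed and convex in the normed space $Y$, so Lemma~\ref{lemma::boundary_lemma} and Remark~\ref{rem::boundary-lemma} apply verbatim (they are stated for normed spaces). They give $c_1<c_2$ such that the points with $t\in(c_1,c_2)$ lie outside $K_2\cup(-K_2)$. Disjointness of the two intervals uses $K_2\cap(-K_2)=\{0\}$ together with the independence of $Pz_1,Pz_2$. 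This contradicts unisignedness, so $\dim P[X]\leq 1$.

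Hence $P(x)=f(x)u$ for some nonzero $u\in Y$ and a linear functional $f$. Here $f\neq 0$, since $Px\neq 0$, and $u\in K_2\cup(-K_2)$ after rescaling: $Px=f(x)u\in K_2\cup(-K_2)$ with $f(x)\neq0$, and $K_2\cup(-K_2)$ is symmetric and closed under positive scaling.

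The main obstacle is the requirement $f\in X'$, which is new compared with the finite-dimensional case. The argument above only produces an algebraic functional, and nothing forces $f$ to be bounded. Indeed, if $P$ is not assumed bounded, the statement is false: take $X=Y=\ell^1$ (or any infinite-dimensional space) with $K_1$ a generating cone, an unbounded linear $f$, and $P=f(\cdot)u$. Then $P$ is unisigned with $f\notin X'$. So my plan is to read ``linear map'' in the theorem as ``bounded linear map'', in keeping with the notation $\pi(K_1,K_2)$ for operators between ordered normed spaces.

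Under that reading, continuity of $f$ follows from continuity of $P$. Choose a functional $g\in Y'$ with $g(u)=1$, which exists by Hahn--Banach. Then $f=g\circ P$ is bounded.

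I would state this boundedness assumption explicitly when writing the proof. The remaining steps are routine transcriptions of the finite-dimensional argument.
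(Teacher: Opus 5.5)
Your argument is the one the paper has in mind. The paper only says to rerun the proof of Theorem~\ref{thm::prop(U)_char} with a Hamel basis in place of $\calB$. Your transcription is sound: Zorn's lemma gives $\calB\subseteq K_1$ containing $x,y$, and Lemma~\ref{lemma::boundary_lemma} is applied directly to the segment in $Y$, which needs no continuity of $P$.

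Your boundedness caveat is correct, and it is something the paper's one-line sketch overlooks. For an unbounded $P$ the rank-one case only yields an algebraic $f\in X^*$. So the statement needs one of two repairs: either assume $P$ is bounded, and then $f=g\circ P$ with $g\in Y'$ and $g(u)=1$, as you say; or replace $X'$ by $X^*$ in (ii).

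Your counterexample needs one repair. $P=f(\cdot)u$ with $u\in K_2\setminus\{0\}$ contradicts the theorem only if $f$ takes both signs on $K_1$; otherwise $P\in\pm\pi(K_1,K_2)$ and (i) holds.
\begin{itemize}
\item For $K_1=\ell^1_+$ this is automatic, since positive functionals on a Banach lattice are continuous.
\item For an arbitrary generating cone it is not: an unbounded positive functional on $c_{00}$ gives $P\in\pi(K_1,K_2)$. There you should replace $f$ by $f+g$, where $g\in X'$ satisfies $(f+g)(x_1)=1$ and $(f+g)(x_2)=-1$ for some linearly independent $x_1,x_2\in K_1$.
\end{itemize}
You should also say why (ii) fails: any representation $P=\tilde f(\cdot)v$ forces $\tilde f$ to be a nonzero multiple of the unbounded functional $f+g$.
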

\end{remark}

\section{Riesz* homomorphisms on $S_n$}\label{Riesz*-hom-copositive}

Throughout this section, we assume that $K_1\subseteq \R^n$ and $K_2\subseteq \R^m$ are closed and generating cones. For a matrix $A\in S_n$ and a quadruple of indices $i,j,k,l$, let $A[i,j|k,l]$ denote the submatrix of $A$ corresponding to rows $i,j$ and columns $k,l$. Define the map $\psi_{i,j}^{k,l}\colon S_n\to \R$ as $A\mapsto \det (A[i,j|k,l])$. Recall the following standard result.
\begin{lemma}\label{lem::rank-det}
    For $A\in S_n$, one has $\rank(A)\leq 1$ if and only if $\psi_{i,j}^{k,l}(A)=0$ for all quadruple of indices $i,j,k,l$.    
\end{lemma}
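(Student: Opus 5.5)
The plan is to prove both directions directly from the definition of rank as the dimension of the column space, reducing the vanishing of all $2\times 2$ minors to linear dependence of pairs of columns. Symmetry of $A$ plays no role here; the argument works for any $A\in M_n$. Write $a_1,\ldots,a_n\in\R^n$ for the columns of $A$. Then $\psi_{i,j}^{k,l}(A)=a_{ik}a_{jl}-a_{il}a_{jk}$, which is the $(i,j)$-minor of the $n\times 2$ matrix $[a_k\ a_l]$.

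\emph{Necessity.} Suppose $\rank(A)\leq 1$. Then every submatrix of $A$ has rank at most $1$, because its rows are restrictions of the rows of $A$, which span a space of dimension at most $1$. In particular each $2\times 2$ submatrix $A[i,j|k,l]$ is singular, so $\psi_{i,j}^{k,l}(A)=0$. An equivalent explicit route is to write $A=uv^\top$, so that $\psi_{i,j}^{k,l}(A)=u_iv_ku_jv_l-u_iv_lu_jv_k=0$.

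\emph{Sufficiency.} Assume every $2\times 2$ minor vanishes. If $A=0$ there is nothing to prove, so fix a nonzero column $a_k$ and pick an index $i$ with $a_{ik}\neq 0$. For any column $a_l$ and any row $j$, the hypothesis $\psi_{i,j}^{k,l}(A)=0$ gives $a_{jl}=\frac{a_{il}}{a_{ik}}\,a_{jk}$. Hence $a_l=\frac{a_{il}}{a_{ik}}a_k$ for every $l$, so all columns are multiples of $a_k$ and $\rank(A)\leq 1$.

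There is no real obstacle here. The only points needing care are the bookkeeping of the quadruples $(i,j,k,l)$ and the degenerate quadruples with $i=j$ or $k=l$, whose minors vanish trivially and so cause no trouble in either direction.
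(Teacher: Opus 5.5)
Your proof is correct in both directions. The paper gives no proof of this lemma (it is recalled as a standard fact), and your argument, fixing a nonzero entry $a_{ik}$ and using the minors $\psi_{i,j}^{k,l}$ to show every column is a multiple of $a_k$, is the standard one. It may be worth adding that $\psi_{j,i}^{k,l}$ and $\psi_{i,j}^{l,k}$ agree with $\psi_{i,j}^{k,l}$ up to sign, so vanishing of the minors with $i<j$, $k<l$ already supplies the hypothesis your sufficiency step uses, where $j$ and $l$ may lie on either side of the fixed $i$ and $k$; this is how the lemma is invoked in the proof of Theorem~\ref{thm::cp-rank-one-non-increasing}.
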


A linear map $T\colon S_n\to S_n$ is referred to as {\it rank one non-increasing}, if $\rank(T(A))\leq 1$ whenever $\rank(A)=~1$.

\begin{theorem}[{\cite[Corollary 3]{Lim01021990}}]\label{thm::lim}
    Let $T\colon S_n\to S_m$ be a rank one non-increasing linear map. Then one of the following holds.
    \begin{enumerate}[label=\upshape(\roman*)]
        \item \label{lim(i)}
            There exists $P\in M_{m\times n}$ and $\lambda\in \{1,-1\}$ such that $T(A)= \lambda P A P^\top$.
        \item \label{lim(ii)}
            There exists a rank one matrix $B\in S_m$ such that $T[S_n]=\text{span}(B)$.
    \end{enumerate}
\end{theorem}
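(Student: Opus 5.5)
Since the statement is a structural result quoted from \cite{Lim01021990}, I would reprove it by passing to quadratic maps and working over $\mathbb{C}$. The key starting observation is that, for each quadruple $i,j,k,l$, the function $x\mapsto \psi_{i,j}^{k,l}(T(xx^\top))$ is a homogeneous quartic polynomial on $\R^n$. By Lemma~\ref{lem::rank-det} and the hypothesis, it vanishes identically. Extending $T$ complex-linearly to complex symmetric matrices, the same polynomial identities give $\rank T(zz^\top)\le 1$ for all $z\in\mathbb{C}^n$. Over $\mathbb{C}$, every symmetric matrix of rank at most one has the form $ww^\top$. So for each $z$ we can write $T(zz^\top)=w(z)w(z)^\top$, with $w(z)\in\mathbb{C}^m$ determined up to sign.

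Next, fix $x,y$ and look along lines. We have
\[
T\bigl((x+ty)(x+ty)^\top\bigr)=T(xx^\top)+t\,T(xy^\top+yx^\top)+t^2\,T(yy^\top),
\]
and this has rank at most one for all $t\in\mathbb{C}$. Write $T(xx^\top)=aa^\top$ and $T(yy^\top)=bb^\top$. If $a,b$ are linearly independent, I would show that the middle term must be $\pm(ab^\top+ba^\top)$. The method is to expand the $2\times 2$ minors in the basis adapted to $a,b$ and compare coefficients of $t$ and $t^3$; this forces the middle term into $\mathrm{span}\{aa^\top,bb^\top,ab^\top+ba^\top\}$ with prescribed coefficients. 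Consequently the curve is $(a\pm tb)(a\pm tb)^\top$, which is the statement that $w$ is additive along the line up to a sign choice. The dichotomy then arises globally. Either some two images $T(xx^\top)$, $T(yy^\top)$ have independent square roots, or all images $T(zz^\top)$ are multiples of a single $bb^\top$. In the latter case, since the $zz^\top$ span all symmetric matrices, $T[S_n]\subseteq \mathrm{span}(bb^\top)$, and realness of $T$ allows $b$ to be chosen real up to a scalar. That is case~\ref{lim(ii)}.

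In the nondegenerate case, I would fix a basis $e_1,\dots,e_n$ in general position, so that the images $T(e_ie_i^\top)$ are pairwise non-proportional. Using the line argument, I would then choose signs of $w(e_i)$ consistently and define $Q e_i:=w(e_i)$. The claim to prove is $T(zz^\top)=(Qz)(Qz)^\top$ for all $z$. It holds on each coordinate line by the previous step. For general $z$, I would extend by induction on the number of nonzero coordinates, applying the line argument to $x=\sum_{i<k}z_ie_i$ and $y=z_ke_k$. Once the identity holds for all $z$, linearity and the fact that the $zz^\top$ span $S_n$ give $T(A)=QAQ^\top$ on all complex symmetric $A$.

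Finally, I would return to the reals. Since $T$ maps real matrices to real matrices, the matrix $QAQ^\top$ is real for all real $A$; taking $A=e_ie_i^\top$ shows each column of $Q$ is real up to a common scalar $\mu$ with $\mu^2$ real. If $\mu^2>0$, I take $P$ real and $\lambda=1$; if $\mu^2<0$, I take $\lambda=-1$. This yields case~\ref{lim(i)}.

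The main obstacle is the consistency of the sign choices for $w$ across different lines. The line argument only gives additivity up to $\pm$, and degenerate situations occur where some $T(zz^\top)=0$ or two images become proportional. I would handle this with a Zariski-density argument. The set of $z$ where all the relevant images are pairwise non-proportional is a nonempty Zariski-open set, so it suffices to establish the polynomial identity $T(zz^\top)=(Qz)(Qz)^\top$ there; it then extends to all of $\mathbb{C}^n$ by continuity.
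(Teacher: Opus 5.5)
\textbf{Verdict: the approach is sound, but the sign-consistency step, which you flag as the main obstacle, is never actually proved.}

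The paper gives no proof of this statement; it is imported from \cite[Corollary 3]{Lim01021990}, so your argument has to stand on its own. Most of it does. The quartic minor identities pass to $\mathbb{C}^n$. The coefficient comparison along $t\mapsto x+ty$ really does force $T(xy^\top+yx^\top)=\pm(ab^\top+ba^\top)$ when $a,b$ are independent. The dichotomy is correct, and a basis with pairwise non-proportional images exists by Zariski density.

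The gap is in how you propose to handle the signs. Passing to a Zariski-open set of good $z$ only discards the points where the line argument is unavailable. At the good points, the line argument still only gives $T(xe_k^\top+e_kx^\top)=\sigma(x)\bigl((Qx)w_k^\top+w_k(Qx)^\top\bigr)$, where $w_k=w(e_k)$. The sign $\sigma(x)\in\{\pm1\}$ could a priori vary with $x=\sum_{i<k}z_ie_i$, and density plus continuity says nothing about that. Already for $k=3$, after normalizing $\sigma_{12}=1$, you need the pairwise signs to satisfy $\sigma_{13}=\sigma_{23}$. The pairwise line arguments alone do not give this.

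What closes the gap is linearity in $x$. Put $V=\mathrm{span}(e_1,\dots,e_{k-1})$, $L_1(x)=T(xe_k^\top+e_kx^\top)$ and $L_2(x)=(Qx)w_k^\top+w_k(Qx)^\top$; both are linear on $V$. Let $D=\{x\in V: Qx\in\mathrm{span}(w_k)\}$. This is a subspace, and it is proper because $Qe_1=w_1\notin\mathrm{span}(w_k)$.
\begin{itemize}
\item The line argument gives $V=\ker(L_1-L_2)\cup\ker(L_1+L_2)\cup D$.
\item A vector space over an infinite field is not a finite union of proper subspaces, so $L_1=L_2$ or $L_1=-L_2$ on all of $V$.
\item In the second case, replace $w_k$ by $-w_k$. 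This affects neither $T(e_ke_k^\top)$ nor the earlier steps.
\item Expanding with the induction hypothesis then gives $T\bigl((x+te_k)(x+te_k)^\top\bigr)=(Qx+tw_k)(Qx+tw_k)^\top$ for all $x\in V$ and $t\in\mathbb{C}$.
\end{itemize}
So the induction closes, and no continuity argument is needed.

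Two smaller repairs are also needed.
\begin{itemize}
\item In the descent to $\R$, use the standard unit vectors $\varepsilon_j$, since your general-position $e_j$ need not be real. The diagonal test matrices only give $Q\varepsilon_j=\mu_jr_j$ with $r_j$ real and $\mu_j\in\{1,\mathrm{i}\}$ depending on $j$. To force a common $\mu$ you also need $T(\varepsilon_j\varepsilon_l^\top+\varepsilon_l\varepsilon_j^\top)$ to be real: $\mathrm{i}(r_jr_l^\top+r_lr_j^\top)$ is real only if $r_j=0$ or $r_l=0$.
\item The map $T=0$ belongs to (i) with $P=0$, not to (ii).
\end{itemize}

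For comparison, unique factorization gives a route with no sign bookkeeping. The matrix $T(zz^\top)$ has rank at most one over $\mathbb{C}(z)$, so it equals $c\,vv^\top$ with $v$ a primitive polynomial vector and $c$ a polynomial. The degree count $\deg c+2\deg v=2$ then yields exactly the two cases.
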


Let $\Lambda_n:=\{uu^\top \mid u\in K_1\}$ and $\Lambda_m:=\{uu^\top \mid u\in K_2\}$. We know, from Subsection~\ref{ex::COP} that $\Lambda_n=Ext(CP(K_1))$ and $\Lambda_m=Ext(CP(K_2))$. In our next result, we characterize linear maps $T\colon S_n\to S_m$ that satisfy $T[\Lambda_n]\subseteq \Lambda_m$. Observe that a linear map $T\colon S_n\to S_m$ is rank one non-increasing if and only if $T[\{u u^\top\mid u\in \R^n\}]\subseteq \{u u^\top\mid u\in \R^m\}$. Hence, the next result may be considered as an extension of Theorem~\ref{thm::lim}.

\begin{theorem}\label{thm::cp-rank-one-non-increasing}
    Let $K_1\subseteq \R^n$ and $K_2\subseteq \R^m$ be cones with $K_1$ being closed and $K_2$ being generating, and let $T\colon S_m\to S_n$ be linear. Then $T[\Lambda_m]\subseteq \Lambda_n$ if and only if $T$ has one of the following forms:
    \begin{enumerate}[label=\upshape(\roman*)]
        \item 
            There exists a $(K_2, K_1)$-unisigned matrix $P\in M_{n\times m}$ such that $T(A)=P A P^\top$ for $A\in S_m$.
        \item 
            There exists $U\in COP(K_2)$ and $y\in K_1$ such that $T(A)=\langle A,U\rangle yy^\top$ for $A\in S_m$.
    \end{enumerate}
\end{theorem}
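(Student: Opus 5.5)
Sufficiency is a direct check. In form (i), take $u\in K_2$. Then $T(uu^\top)=(Pu)(Pu)^\top$, and $Pu\in K_1\cup(-K_1)$, so Lemma~\ref{cp-lemma}\ref{Lambda-characterization} gives $T(uu^\top)\in\Lambda_n$. In form (ii), $T(uu^\top)=(u^\top Uu)\,yy^\top=(\sqrt{u^\top Uu}\,y)(\sqrt{u^\top Uu}\,y)^\top$, which makes sense because $u^\top Uu\ge 0$ for $U\in COP(K_2)$. The scalar $\sqrt{u^\top Uu}\,y$ lies in $K_1$, so this matrix lies in $\Lambda_n$.

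For necessity, the plan is to first show that $T$ is rank one non-increasing, so that Theorem~\ref{thm::lim} applies.
\begin{itemize}
\item By hypothesis, $\rank T(uu^\top)\le 1$ for all $u\in K_2$, so every polynomial $u\mapsto\psi_{i,j}^{k,l}(T(uu^\top))$ vanishes on $K_2$ (Lemma~\ref{lem::rank-det}).
\item Since $K_2$ is generating, it has nonempty interior in $\R^m$: a basis contained in $K_2$ spans a simplex cone with interior points.
\item A polynomial vanishing on a nonempty open set is identically zero. Hence $\rank T(uu^\top)\le 1$ for every $u\in\R^m$.
\item Every rank one $A\in S_m$ is $\pm uu^\top$, so $T$ is rank one non-increasing.
\end{itemize}

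Theorem~\ref{thm::lim} now leaves two cases.

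In case \ref{lim(i)}, $T(A)=\lambda PAP^\top$ with $\lambda=\pm1$. For $u\in K_2$ we need $\lambda(Pu)(Pu)^\top\in\Lambda_n$, and matrices in $\Lambda_n$ are positive semidefinite.
\begin{itemize}
\item If $\lambda=-1$, then $Pu=0$ for every $u\in K_2$. Since $K_2$ spans $\R^m$, $P=0$, and we may replace $\lambda$ by $1$.
\item With $\lambda=1$, Lemma~\ref{cp-lemma}\ref{Lambda-characterization} turns $(Pu)(Pu)^\top\in\Lambda_n$ into $Pu\in K_1\cup(-K_1)$. So $P$ is $(K_2,K_1)$-unisigned, which is form (i).
\end{itemize}

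In case \ref{lim(ii)}, $T(A)=f(A)B$ for a linear functional $f$ and a rank one $B=\pm yy^\top$. By absorbing the sign into $f$, assume $B=yy^\top$, and represent $f$ as $f(A)=\langle A,U\rangle$ with $U\in S_m$.
\begin{itemize}
\item If $T\neq 0$, some $u\in K_2$ has $T(uu^\top)\ne0$, because $\{uu^\top\mid u\in K_2\}$ spans $S_m$ (it spans $CP(K_2)$, which is generating).
\item For that $u$, $f(uu^\top)yy^\top\in\Lambda_n\setminus\{0\}$ is nonzero and positive semidefinite, so $f(uu^\top)>0$. Lemma~\ref{cp-lemma} then gives $y\in K_1\cup(-K_1)$, and replacing $y$ by $-y$ if needed puts $y\in K_1$.
\item For every $u\in K_2$, $f(uu^\top)yy^\top$ is positive semidefinite, so $u^\top Uu=f(uu^\top)\ge0$. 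Hence $U\in COP(K_2)$, which is form (ii).
\item If $T=0$, form (ii) holds with $U=0$.
\end{itemize}

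The main obstacle is the first step of necessity: upgrading rank control from $K_2$ to all of $\R^m$. The polynomial-identity argument should handle it, provided the claim that a generating cone in finite dimensions has nonempty interior is justified carefully. The remaining steps are sign bookkeeping.
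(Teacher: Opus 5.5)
Your proposal is correct and follows the paper's proof: you show $T$ is rank one non-increasing by a polynomial-identity argument, invoke Lim's theorem, and settle both cases with Lemma~\ref{cp-lemma} and the positive semidefiniteness of elements of $\Lambda_n$. The one difference is in the first step. The paper restricts to the line $\alpha\mapsto u_1+\alpha u_2$ (with $u=u_1-u_2$, $u_i\in K_2$) and uses a univariate polynomial vanishing on $[0,\infty)$, which avoids your nonempty-interior step; your handling of the degenerate cases ($\lambda=-1$ forcing $P=0$, and choosing $u$ with $T(uu^\top)\neq 0$ to get $y\in K_1\cup(-K_1)$) is more careful than the paper's.
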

\begin{proof}
    Sufficiency is easily verified. Let $T[\Lambda_m]\subseteq \Lambda_n$. First, we show that $T$ is rank one non-increasing. Let $A\in S_m$ be a rank one matrix so that there exists $u\in \R^m\setminus \{0\}$ and $\mu\in \{1,-1\}$ such that $A= \mu uu^\top$. Since $\rank(A)=\rank(\alpha A)$ for any nonzero $\alpha \in \R$, without loss of generality, we assume that $\mu=1$. Since $K_2$ is generating, there exist $u_1, u_2\in K_2$ such that $u=u_1-u_2$. For $\alpha\in \R$, we define $u_\alpha:= u_1+ \alpha u_2$ and
    \[
       A_\alpha := u_\alpha u_\alpha^\top = u_1 u_1^\top + \alpha^2 u_2u_2^\top + \alpha (u_1u_2^\top +u_2u_1^\top). 
    \]
     Then, for each $\alpha\geq 0$, we get $u_\alpha\in K_2$ and $A_\alpha\in \Lambda_m$. Notice that $A_{-1}=A$. Fix $1\leq i< j$ and $1\leq k<l$, and consider the map $\Psi\colon \R\to \R$; $\alpha\mapsto \psi_{i,j}^{k,l}(T(A_\alpha))$. It is clear that $\Psi$ is a polynomial map. Since $T[\Lambda_m]\subseteq \Lambda_n$, we get $\rank(T(A_\alpha))\leq 1$ for each  $\alpha \geq 0$. Hence, by Lemma~\ref{lem::rank-det}, it follows that $\Psi(\alpha)=0$ for all $\alpha\geq 0$. Since a nonzero polynomial can have at most finitely many zeros, we get $\Psi=0.$ Hence $\det (T(A_\alpha)[i,j|k,l])=0$ for every $\alpha\in \R$. In particular, $\det (T(A_{-1})[i,j|k,l])=0$. Since $i,j,k,l$ are arbitrary, we get $\rank (T(A))\leq~ 1$. This proves that $T$ is rank one non-increasing.

    Theorem \ref{thm::lim} applies and so $T$ takes one of the two forms given there. Suppose that $T$ is of form \ref{lim(i)} so that $T(A)=\lambda P A P^\top,~A\in S_n$ with $\lambda \in \{-1,1\}$. Notice that, for any $u\in K_1$, $\tr(T(uu^\top))=\lambda \|Pu\|^2$. Since $\tr(U)\geq 0$ for any $U\in \Lambda_m$ and $T(uu^\top)\in \Lambda_m$, we conclude that $\lambda=1$.  If, $Pu\notin K_1\cup(-K_1)$, for some $u\in K_2$, then, by Lemma~\ref{cp-lemma}, $T(uu^\top)=Pu(Pu)^\top\notin \Lambda_n$, a contradiction. Hence $P$ is $(K_2, K_1)$-unisigned.
    
    Suppose $T$ is of form \ref{lim(ii)} so that there exists a rank one matrix $B\in S_n$ and linear functional $f\colon S_m\to\R$ such that
    \[
        T(A)=f(A)B,\quad \forall A\in S_m.
    \]
   Let $y\in \R^n$ {be} such that $B=\pm yy^\top$. Hence $T(A)=\hat{f}(A) yy^\top$ for any $A\in S_m$, where $\hat{f}=\pm f$. Let $U\in S_m$ be such that $\hat{f}(A)=\langle A, U\rangle$ for all $A\in S_m.$ Then, for any $u\in K_2$,
    \[
        T(uu^\top)=\hat{f}(uu^\top) yy^\top=\langle uu^\top, U\rangle yy^\top = (\tr(uu^\top U)) yy^\top= (u^\top U u) yy^\top.
    \]
    Since $T[\Lambda_m] \subseteq \Lambda_n$, it follows that $u^\top U u\geq 0$ and $yy^\top \in \Lambda_n$, showing that $U\in COP(K_2)$ and $y\in K_1\cup (-K_1)$. Since $yy^\top =(-y)(-y)^\top$, without loss of generality, we assume $y\in K_1$. We have shown that there exists $y \in K_1$ and $U\in COP(K_2)$ such that $T(A)=\langle A,U\rangle yy^\top$, completing the proof.
\end{proof}

Next, we prove our third main result, where we characterize all Riesz* homomorphisms on $T\colon S_n\to S_m$, where both the spaces are equipped with the respective cone of $K$-copositive matrices. For $T\colon S_n\to S_m$, the operator $T^*\colon S_m\to S_n$ denotes the (algebraic) dual operator of~$T$.

\begin{theorem}\label{thm::Riesz*hom_on_X_m}
    Let $K_1\subseteq \R^n$ and $K_2\subseteq \R^m$ be closed and generating cones, and $S_n$ and $S_m$ be equipped with the cones $COP(K_1)$ and $COP(K_2)$, respectively. For a linear map $T\colon S_n\to S_m$, the following statements are equivalent:
    \begin{enumerate}[label=\upshape(\roman*)]
        \item \label{label-1}
            $T$ is a Riesz* homomorphism.
        \item \label{label-2}
            $T^*[\Lambda_m]\subseteq \Lambda_n$.
        \item \label{label-3}
            $T$ assumes one of the following two forms:
            \begin{enumerate}
                \item [(a)]
                    $T(A)=P^\top A P$, for some $(K_2,K_1)$-unisigned matrix $P\in M_{n\times m}$. \label{thm::Riesz*hom_on_X_m_eq::A}
                \item [(b)]
                    $T(A)=(y^\top Ay) U$, for some $y\in K_1$ and $U\in COP(K_2)$. \label{thm::Riesz*hom_on_X_m_eq::(iii)}
            \end{enumerate}  
    \end{enumerate}
\end{theorem}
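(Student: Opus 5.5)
We prove \ref{label-1}$\Leftrightarrow$\ref{label-2} and \ref{label-2}$\Leftrightarrow$\ref{label-3}. The equivalence \ref{label-2}$\Leftrightarrow$\ref{label-3} should come straight from Theorem~\ref{thm::cp-rank-one-non-increasing} applied to $T^*\colon S_m\to S_n$, with the roles of the cones arranged so that $T^*[\Lambda_m]\subseteq\Lambda_n$. There are two alternatives.

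In the first alternative, $T^*(B)=PBP^\top$ with $P\in M_{n\times m}$ being $(K_2,K_1)$-unisigned. Dualizing through $\langle T(A),B\rangle=\langle A,T^*(B)\rangle=\tr(APBP^\top)=\tr(P^\top AP\,B)$ gives $T(A)=P^\top AP$, which is form (a).

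In the second alternative, $T^*(B)=\langle B,U\rangle yy^\top$ with $y\in K_1$ and $U\in COP(K_2)$. Then $\langle T(A),B\rangle=\langle B,U\rangle\, y^\top Ay$, so $T(A)=(y^\top Ay)U$, which is form (b).

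Conversely, each form has an adjoint of the corresponding type in Theorem~\ref{thm::cp-rank-one-non-increasing}, whose sufficiency part gives $T^*[\Lambda_m]\subseteq\Lambda_n$. The hypotheses of that theorem are satisfied, since both cones are closed and generating.

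For \ref{label-1}$\Leftrightarrow$\ref{label-2} I would use the characterization of Riesz* functionals. By Proposition~\ref{prop:Extremals-are-R*} and Lemma~\ref{cp-lemma}, the Riesz* functionals on $(S_k,COP(K))$ are exactly the elements of $\overline{Ext(CP(K))}=\Lambda$, which is closed. Here we use that $COP(K)$ is closed with nonempty interior; the interior is nonempty because the identity $I$ is an interior point, as $x^\top Ix>0$ on $K\setminus\{0\}$ and $K\cap\{\|x\|=1\}$ is compact.

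\emph{Proof of \ref{label-1}$\Rightarrow$\ref{label-2}.} Let $uu^\top\in\Lambda_m$. The functional $\varphi_u\colon B\mapsto u^\top Bu$ is a Riesz* homomorphism on $S_m$. By Remark~\ref{Rem::Riesz*-composition}(ii), the composition $\varphi_u\circ T$ is a Riesz* functional on $S_n$. Under the trace identification, $\varphi_u\circ T$ corresponds to $T^*(uu^\top)$, so $T^*(uu^\top)\in\Lambda_n$.

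\emph{Proof of \ref{label-2}$\Rightarrow$\ref{label-1}.} This is the main step. Take a finite nonempty $F\subseteq S_n$ and $w\in F^{ul}$; we must show $T(w)\le Z$ for every $Z\in T[F]^u$. Because $COP(K_2)$ is closed, Proposition~\ref{prop::xinKifff(x)>0} together with $COP(K_2)^*=CP(K_2)=\operatorname{cone}(\Lambda_m)$ reduces this to checking $u^\top T(w)u\le u^\top Zu$ for every $u\in K_2$.

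Fix $u\in K_2$ and set $f:=\varphi_u\circ T$. By \ref{label-2}, $f$ is given by $T^*(uu^\top)\in\Lambda_n$, so $f$ is a Riesz* functional on $S_n$. Hence $f(w)\in f[F]^{ul}$ in $\R$, and since $\R$ is a lattice this means $f(w)\le\max f[F]$. On the other hand, $Z\ge T(a)$ for each $a\in F$, so $u^\top Zu\ge f(a)$ for all $a\in F$, giving $u^\top Zu\ge\max f[F]\ge f(w)=u^\top T(w)u$.

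The points needing care are the identification of the Riesz* functionals on $S_n$ with $\Lambda_n$, which relies on the closedness of $\Lambda$ from Lemma~\ref{cp-lemma}, and the reduction of $\le$ in $S_m$ to testing against $\Lambda_m$.
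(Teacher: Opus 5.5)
Your proof is correct, but it closes the cycle differently from the paper. The paper proves (i)$\Rightarrow$(ii)$\Rightarrow$(iii)$\Rightarrow$(i). For the last step it first establishes the explicit description \eqref{eq:Ful} of $F^{ul}$ in $(S_n,COP(K))$: $A\in F^{ul}$ iff $x^\top Ax\le\max_i x^\top A_ix$ for all $x\in K$. It then checks forms (a) and (b) separately.

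You instead prove (ii)$\Rightarrow$(i) directly. You test $Z-T(w)$ against each $u\in K_2$ and use that $\varphi_u\circ T$ is the Riesz* functional given by $T^*(uu^\top)\in\Lambda_n$. This is the total-set criterion of Boisen et al.\ that the paper mentions in the remark after the theorem. The implication (iii)$\Rightarrow$(ii) then follows from the sufficiency half of Theorem~\ref{thm::cp-rank-one-non-increasing}.

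What each route gives: yours avoids the case split and shows that (i)$\Leftrightarrow$(ii) does not depend on Lim's theorem. The paper's route yields the formula \eqref{eq:Ful}, which is of independent interest. Both rest on the same ingredient, namely that $A\mapsto x^\top Ax$ is a Riesz* functional for $x\in K$. In fact, the paper's computation for form (a) is exactly the identity $\varphi_x\circ T=\varphi_{Px}$, which is your condition (ii).

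Two small remarks. First, reducing $T(w)\le Z$ to testing against $u\in K_2$ is just the definition of $COP(K_2)$, so no appeal to duality is needed there. Second, your check that $I$ is an interior point of $COP(K)$ supplies a hypothesis of Proposition~\ref{prop:Extremals-are-R*} that the paper uses without verifying it.
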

\begin{proof}
    \ref{label-1} $\implies$ \ref{label-2}: Let $T\colon S_n\to S_m$ be a Riesz* homomorphism. Recall from the Subsection~\ref{ex::COP} that 
    \[
        \Lambda_n=Ext((COP(K_1))^*) \quad \text{and}\quad \Lambda_m=Ext((COP(K_2))^*).
    \]
    Using the identification of $(S_n)^*$ with $S_n$, we know by Proposition~\ref{prop:Extremals-are-R*}, that $\Lambda_n$ and $\Lambda_m$ consists of all the Riesz* functionals on $(S^n, COP(K_1))$ and $(S^m, COP(K_2)$, respectively. By Remark \ref{Rem::Riesz*-composition}(ii), if $f\colon S_m\to \R$ is a Riesz* homomorphism, then so is $f\circ T$. Hence, $T^* [\Lambda_m]\subseteq \Lambda_n$.\\
    \ref{label-2} $\implies$ \ref{label-3}: This follows by a straightforward calculation after applying Theorem \ref{thm::cp-rank-one-non-increasing} to $T^*$. \\
    \ref{label-3} $\implies$ \ref{label-1}: Let $F=\{A_1,\ldots, A_k\}\subseteq S_n$ for some $k\in \N$. We first prove that, for any $A\in S_n$ and for any closed and generating cone $K\subseteq \R^n$, one has
    \begin{equation}\label{eq:Ful}
        A\in F^{ul} \quad\Longleftrightarrow\quad\forall x\in K\colon~ \langle Ax,x\rangle\leq \max_{1\leq i\leq k} \langle A_ix,x\rangle.
    \end{equation} 
    Let $A\in S_n$ be such that the inequality in \eqref{eq:Ful} is true. Now, for any $B\in F^u$ and $x\in K$, by definition, we get $\langle Bx,x\rangle\geq \langle A_ix,x\rangle$ for every $1\leq i\leq k$. Hence
    \[
        \langle Bx,x \rangle\geq \max_i \langle A_ix,x\rangle\geq \langle Ax,x\rangle,
    \] 
    showing that $A\in F^{ul}$. Conversely, for $x\in K$, consider the functional $f$ on $S_n$ given by the matrix $xx^\top$, i.e. $f\colon A\mapsto \tr(A(xx^\top))=\langle Ax,x\rangle$. Observe that 
    \[
        f[F]^{ul}=(-\infty, \max_{1\leq i\leq k} \langle A_ix,x\rangle].
    \]
    Since $f$ is a Riesz* functional on $(S_n, COP(K))$ (cf. Subsection~\ref{ex::COP}), the implication ``$\implies$" in \ref{eq:Ful} follows.
    
    Now, let $T$ be of form (a) in \ref{label-3} so that $T(A)=P^\top AP, ~A\in S_n$ with $P\in M_n$ being $(K_2, K_1)$-unisigned. Let $F=\{A_1,\ldots, A_k\}\subseteq S_n$ for some $k\in \N$ and $A\in F^{ul}$. We show that $T(A)\in T[F]^{ul}$. We use \eqref{eq:Ful}. Since $P$ is $(K_2,K_1)$-unisigned, for every $x\in K_2$, we get $Px\in K_1\cup(-K_1)$. Since $A\in F^{ul}$, 
    \[
        \forall x\in K_2\colon \quad  \langle T(A)x,x\rangle = \langle APx,Px\rangle \leq \max_{i}\langle A_iPx,Px\rangle=\max_i \langle T(A_i)x,x\rangle.
    \]
    Again, by using \eqref{eq:Ful}, it follows that $T(A)\in T[F]^{ul}$, showing that $T$ is a Riesz* homomorphism. Let $T$ be of form (b) in \ref{label-3}. Then
    \[
        \forall x\in K_2\colon\quad \langle T(A)x,x\rangle = \langle Ay,y\rangle \langle Ux,x\rangle\leq (\max_i \langle A_iy,y\rangle) \langle Ux,x\rangle=\max_i \langle T(A_i)x,x\rangle,
    \]
    showing that $T(A)\in T[F]^{ul}$.
\end{proof}

\begin{remark}
     Let $(X,K_1)$ and $(Y,K_2)$ be partially ordered vector spaces. In \cite[Proposition 7.1(ii)]{Florian-Valentin-Anke-Janko-Onno}, the authors showed that if $A\subseteq K_2^*$ is {total}\footnote{For a partially ordered vector space $(X,K)$, a subset $A\subseteq K^*$ is called {\it total} if, for every $x\in X$, one has $x\in K$ if and only if $f(x)\geq 0$ for every $f\in A$.} and $T\colon X\to Y$ is a linear map such that $f\circ T$ is a Riesz* functional for every $f\in A$, then $T$ is a Riesz* homomorphism. The implication (ii) $\implies$ (i) in Theorem~\ref{thm::Riesz*hom_on_X_m} can also be obtained using this result.
\end{remark}

\begin{remark}
    The question of whether there exists an invertible Riesz* homomorphism $T$ on a partially ordered vector space $X$ such that $T^{-1}$ is not a Riesz* homomorphism was open until 2018. In \cite{van_Imhoff}, the author gave an example, on the space $P[0,1]$ of all polynomials defined on $[0,1]$, of a bijective Riesz* homomorphism whose inverse is not a Riesz* homomorphism. It is evident from Theorem~\ref{thm::Riesz*hom_on_X_m} that if $T\colon S_n\to S_n$ is a bijective Riesz* homomorphism on $(S_n, COP(K))$, then $T^{-1}$ need not be a Riesz* homomorphism, in general. Indeed, let $P\in M_n$ be such that $P\in \pi(K)$, bijective and $P^{-1}\notin \pi(K)$. Then, by Theorem~\ref{thm::prop(U)_char}, $P$ is $K$-unisigned but $P^{-1}$ is not, and so $T\colon S_n\to S_n$, $A\mapsto P^\top AP$ is a Riesz* homomorphism but $T^{-1}$, given by $T^{-1}(A)=(P^{-1})^\top A P^{-1}$, is not a Riesz* homomorphism. The existence of such $P$ is proved in \cite{raickwade2026invertiblepositivemapsautomorphism}.
\end{remark}

The following corollary, obtained by combining Proposition~\ref{bipositive-is-Riesz*}, Theorem~\ref{thm::prop(U)_char}, and Theorem~\ref{thm::Riesz*hom_on_X_m}, recovers the main results of \cite{shitov2019linearmappingspreservingcopositive} and \cite{GOWDA20133862}.

\begin{corollary}\label{cor::to Riesz*}
    Let $K\subseteq \R^n$ be a closed and generating cone, and $T\colon S_n\to S_n$ be  linear. Then $T\in Aut(COP(K))$ if and only if there exists $P\in Aut(K)$ such that 
    \[
        T(A)=P^\top AP, \quad \forall A\in S_n.
    \]
\end{corollary}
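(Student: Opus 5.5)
Sufficiency is direct. If $P\in Aut(K)$, then $P$ is in particular $K$-unisigned, so by Theorem~\ref{thm::Riesz*hom_on_X_m} the map $T(A)=P^\top AP$ is a Riesz* homomorphism and hence positive. Its inverse is $T^{-1}(A)=(P^{-1})^\top A P^{-1}$, and $P^{-1}\in Aut(K)$ as well, so $T^{-1}$ is positive by the same argument. Therefore $T\in Aut(COP(K))$.

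For necessity, let $T\in Aut(COP(K))$. By Proposition~\ref{bipositive-is-Riesz*}, both $T$ and $T^{-1}$ are Riesz* homomorphisms on $(S_n,COP(K))$, so each has one of the two forms in Theorem~\ref{thm::Riesz*hom_on_X_m}\ref{label-3}. Form (b), $A\mapsto (y^\top Ay)U$, has rank at most one as a linear map on $S_n$. It therefore cannot be bijective when $\dim S_n>1$; the case $n=1$ is trivial and can be handled separately. So we may write $T(A)=P^\top AP$ and $T^{-1}(A)=Q^\top AQ$, where $P$ and $Q$ are $K$-unisigned.

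Next we use Theorem~\ref{thm::prop(U)_char} on $P$. If $P=uv^\top$ has rank one, then $T(A)=(u^\top Au)vv^\top$ again has rank one and is not bijective. Hence $P\in\pi(K)\cup(-\pi(K))$, and replacing $P$ by $-P$ (which leaves $T$ unchanged) we get $P\in\pi(K)$. The same argument gives $Q\in\pi(K)$.

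The main remaining step is to relate $Q$ to $P^{-1}$. From $T^{-1}\circ T=\mathrm{id}$ we get $(PQ)^\top A(PQ)=A$ for all $A\in S_n$. Put $R=PQ$. Taking $A=xx^\top$ gives $(x^\top R)^\top(x^\top R)=xx^\top$, so $R^\top x=\pm x$ for every $x$. Hence every vector is an eigenvector of $R^\top$, so $R^\top$ is a scalar multiple of the identity, and therefore $R=\pm I$. Thus $Q=\pm P^{-1}$. Since both $P$ and $Q$ lie in $\pi(K)$, $P$ is invertible with $P^{-1}\in\pi(K)\cup(-\pi(K))$.

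It remains to exclude $-P^{-1}\in\pi(K)$ with $P^{-1}\notin\pi(K)$. In that case, for $x\in K$ we would have $x=P(P^{-1}x)$ and $-P^{-1}x\in K$, so $x=-P(-P^{-1}x)\in -P[K]\subseteq -K$. This gives $K\subseteq -K$, i.e. $K=\{0\}$, which contradicts $K$ being generating. Hence $P^{-1}\in\pi(K)$, so $P\in Aut(K)$, which completes the proof.
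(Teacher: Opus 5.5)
Your proof is correct and follows the route the paper indicates. Proposition~\ref{bipositive-is-Riesz*} makes $T$ and $T^{-1}$ Riesz* homomorphisms, Theorem~\ref{thm::Riesz*hom_on_X_m} together with bijectivity forces the form $A\mapsto P^\top AP$, and Theorem~\ref{thm::prop(U)_char} places $P$ and $P^{-1}$ in $\pi(K)\cup(-\pi(K))$; you also correctly supply the details the paper leaves implicit, namely excluding rank-one maps, identifying $Q=\pm P^{-1}$ via $PQ=\pm I$, and ruling out the sign mismatch using $K\cap(-K)=\{0\}$.
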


\section{Into preservers of the standard type}\label{into-preservers}

Recall that, in the partially ordered vector space $(S_n,COP(K))$, by definition a linear map $T\colon S_n \to S_n$ is positive if $T[COP(K)]\subseteq COP(K)$, which is equivalent to $T$ being an into preserver of $K$-copositive matrices. Since every Riesz* homomorphism is positive, it is evident from Theorem~\ref{thm::Riesz*hom_on_X_m}(iii) and Theorem~\ref{thm::prop(U)_char} that the conclusion of Claim~\ref{claim1} is not valid. Below, we provide a counter-example.
\begin{example}\label{count-example}
    For $P:=\begin{psmallmatrix}
        1&-1\\
        1&-1
    \end{psmallmatrix}\in M_2$, let $T\colon S^2\to S^2$ be defined by $T(A):=P^\top A P, ~A\in S^2$. Let $A$ be $\R^2_+$-copositive, $(x_1,x_2)^\top=:x\in \R^2_+$ and $e:=(1,1)^\top$. Then 
    \[
        x^\top T(A) x= (Px)^\top A (Px)=(x_1-x_2)^2e^\top Ae \geq 0.
    \] 
    Thus, while $P$ is not entry-wise nonnegative, $T$ is still an into preserver of $\R^2_+$-copositive matrices. Observe that $T$ does not arise from a nonnegative matrix, i.e., there exists no map $\hat{T}\colon S^2\to S^2$, given by $\hat{T}(A)=R^\top A R$ with $R\geq 0$ (entry-wise) such that $T= \hat{T}$. The reason is that while $\hat{T}$ maps nonnegative matrices into nonnegative matrices, we have $T(\begin{psmallmatrix}
        1&0\\
        0&0
\end{psmallmatrix})=\begin{psmallmatrix}
        ~~1&-1\\
        -1&~~1
    \end{psmallmatrix}$. 
\end{example}

\begin{remark}
    In the proof of \cite[Theorem 2.2]{Furtado27072021}, they have showed that $T\colon A\mapsto P^\top AP$ is an into preserver of $\R^n_+$-copositive matrices if and only if $Px\in \R^n_+\cup (-\R^n_+)$ for every $x\in \R^n_+$, i.e., $P$ is $\R^n_+$-unisigned. Then they inferred that this condition implies that $P$ is entry-wise nonnegative. This was the mistake in their proof. We have shown, in Theorem~\ref{thm::prop(U)_char}, that $\R^n_+$-unisigned does not imply entry-wise nonnegative or nonpositive, in general.
\end{remark}

Next, we characterize matrices $P$ {for} which the map $A\mapsto P^\top A P$ preserves $K$-copositivity. As a consequence, we amend the flaw in Claim~\ref{claim1}.
\begin{theorem}\label{thm::copositive_characterizartion}
    Let $K_1\subseteq \R^n$ and $K_2\subseteq \R^m$ be closed and generating cones, and $T\colon S_n\to S_m$, be defined by $T(A)=P^\top A P,~A\in S_n.$ Then $T[COP(K_1)]\subseteq COP(K_2)$ if and only if $P$ is $(K_2,K_1)$-unisigned.
\end{theorem}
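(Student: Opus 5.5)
Sufficiency is direct, and necessity goes by contraposition: if $P$ is not $(K_2,K_1)$-unisigned, we build a $K_1$-copositive matrix $A$ whose image $T(A)$ fails $K_2$-copositivity.

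\emph{Sufficiency.} Assume $P$ is $(K_2,K_1)$-unisigned, and let $A\in COP(K_1)$ and $x\in K_2$. Then $Px\in K_1\cup(-K_1)$. Since $(Px)^\top A(Px)=(-Px)^\top A(-Px)$, we get
\[
x^\top T(A)x=(Px)^\top A(Px)\geq 0.
\]
Hence $T(A)\in COP(K_2)$. The same fact also follows from Theorem~\ref{thm::Riesz*hom_on_X_m}: $T$ is then a Riesz* homomorphism, and such maps are positive by Remark~\ref{Rem::Riesz*-composition}(i).

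\emph{Necessity.} Suppose $T[COP(K_1)]\subseteq COP(K_2)$, and assume there is $x\in K_2$ with $z:=Px\notin K_1\cup(-K_1)$. We separate $z$ from both $K_1$ and $-K_1$. Since $K_1$ is closed and $z\notin K_1$, Proposition~\ref{prop::xinKifff(x)>0} gives $w_1\in K_1^*$ with $w_1^\top z<0$. Since $-z\notin K_1$, it also gives $w_2\in K_1^*$ with $w_2^\top z>0$. Define
\[
A:=w_1w_2^\top+w_2w_1^\top .
\]
For $u\in K_1$ we have $u^\top Au=2(w_1^\top u)(w_2^\top u)\geq 0$, so $A\in COP(K_1)$. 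On the other hand,
\[
x^\top T(A)x=z^\top Az=2(w_1^\top z)(w_2^\top z)<0,
\]
so $T(A)\notin COP(K_2)$. This contradicts the assumption, so $P$ must be $(K_2,K_1)$-unisigned.

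The only step needing care is producing the two separating functionals with opposite signs on $z$. This is exactly where closedness of $K_1$ enters, through Proposition~\ref{prop::xinKifff(x)>0} applied to $z$ and to $-z$. After that, the rank-two construction of $A$ is immediate. Generating-ness of $K_2$ is not needed for this direction.
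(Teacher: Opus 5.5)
Your proof is correct. The sufficiency direction is the same computation as in the paper, but for necessity you take a different route.

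The paper works in $S_n$. It uses the duality between $COP(K_1)$ and $CP(K_1)$, observes that $Q=Px(Px)^\top\notin CP(K_1)$, and separates $Q$ from the closed cone $CP(K_1)$ to get some $A\in COP(K_1)$ with $\langle Q,A\rangle<0$. This is a one-line argument that fits the paper's dual-cone and extremal framework. It relies on closedness of $CP(K_1)$, cited from \cite{GOWDA20133862}. It also tacitly uses the fact that a rank-one matrix $zz^\top$ lies in $CP(K_1)$ only if $z\in K_1\cup(-K_1)$. That fact needs a small extra argument beyond Lemma~\ref{cp-lemma}, since the lemma concerns $\Lambda$ rather than all of $CP(K_1)$.

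You instead separate $z=Px$ twice in $\R^n$, once from $K_1$ and once from $-K_1$. You then assemble the explicit witness $A=w_1w_2^\top+w_2w_1^\top$ with $w_1,w_2\in K_1^*$. This uses only closedness of $K_1$ and avoids the completely positive cone entirely. It also gives extra information: when $T$ fails to preserve copositivity, this is always detected by a $K_1$-copositive matrix of rank at most two built from two dual-cone vectors.
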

\begin{proof}
    Suppose $P$ is not $(K_2,K_1)$-unisigned. Let $x\in K_2$ be such that $Px\notin K_1\cup(-K_1)$. Since $COP(K_1)^*=CP(K_1)$ and $Q:=Px(Px)^\top\notin CP(K_1),$ by Proposition~\ref{prop::xinKifff(x)>0}, there exists $A\in COP(K_1)$ such that $0 > \langle Q, A\rangle = x^\top P^\top APx,$ showing that $P^\top AP$ is not $K_2$-copositive. \\
    Conversely, if $P$ is $(K_2,K_1)$-unisigned, then, for any $x\in K_2$ and $A\in COP(K_1)$, we get 
    \[
        x^\top P^\top AP x = \begin{cases}
            (Px)^\top A (Px)\geq 0, & Px\in K_1,\\
            (-Px)^\top A (-Px)\geq 0, & Px\in -K_1.
        \end{cases}\qedhere
    \]
\end{proof}
\begin{corollary}
Let $T\colon S_n\to S_n$, be defined by $T(A)=P^\top A P,~A\in S_n.$ Then $T$ is an into preserver of $\R^n_+$-copositive matrices if and only if $P$ is $\R^n_+$-unisigned.
\end{corollary}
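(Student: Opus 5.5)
This corollary is the special case $m=n$ and $K_1=K_2=\R^n_+$ of Theorem~\ref{thm::copositive_characterizartion}. The plan is to check the hypotheses of that theorem and then read off its conclusion.

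First I check the hypotheses. The cone $\R^n_+$ is closed, since it is an intersection of the closed half-spaces $\{x\mid x_i\geq 0\}$. It is generating, since every $x\in\R^n$ can be written as $x=x^+-x^-$ with $x^\pm\in\R^n_+$. It is also a cone in the sense of Subsection~\ref{prelim:partially_ordered_v.s.}, because $\R^n_+\cap(-\R^n_+)=\{0\}$. Hence Theorem~\ref{thm::copositive_characterizartion} applies with $K_1=K_2=\R^n_+$.

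Next I translate the conclusion. By the remark at the start of Section~\ref{into-preservers}, $T$ being an into preserver of $\R^n_+$-copositive matrices is exactly the condition $T[COP(\R^n_+)]\subseteq COP(\R^n_+)$. By the convention following Definition~\ref{def:unisigned}, $(\R^n_+,\R^n_+)$-unisigned is what we call $\R^n_+$-unisigned. The equivalence stated in the corollary is then precisely the conclusion of the theorem.

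There is no real obstacle; the only care needed is in matching the conventions just described. If desired, one can add that, by Theorem~\ref{thm::prop(U)_char}, this condition means $P\geq 0$, or $P\leq 0$, or $P=uv^\top$ with $u$ entrywise of one sign. Example~\ref{count-example} shows that the rank-one case genuinely occurs.
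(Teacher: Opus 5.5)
Your proposal is correct and matches the paper, which states this corollary without a separate proof as the special case $m=n$, $K_1=K_2=\R^n_+$ of Theorem~\ref{thm::copositive_characterizartion}. Checking that $\R^n_+$ is closed and generating, and matching the terminology for ``into preserver'' and ``$\R^n_+$-unisigned'', is all that is needed.
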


\textbf{Acknowledgment}
Pavankumar thanks Mr. Florian Boisen and Mr. Sujit Damase for carefully reviewing this article and providing useful suggestions that lead to a better presentation of the results. He also thanks Ministry of Education, Government of India, for providing the Prime Minister's Research Fellowship (PMRF) during his PhD.


\begin{thebibliography}{10}

    \bibitem{cones_and_duality}  C. D. Aliprantis and R. Tourky. Cones and duality, volume 84 of {\it Grad. Stud. Math. Providence, RI: AMS} 2007.

    \bibitem{Florian} Boisen, F. (2026). Mild Riesz* homomorphisms of higher degrees. {\it Positivity}, 30(3):33. Id/No 37.

    \bibitem{Florian-Valentin-Anke-Janko-Onno} Boisen, F., Hölker, V. G., Kalauch, A., Stennder, J., and van Gaans, O. (2024). A generalization of Riesz* homomorphisms on order unit spaces. {\it Quaest. Math.}, 47(9):1887–1911.

    \bibitem{Florian-Anke-Janko-Onno} Boisen, F., Kalauch, A., Stennder, J., and van Gaans, O. (2025). Characterizing Riesz* homomorphisms via interval preserving order adjoints. {\it Positivity}, 29(3):18. Id/No 31.

    \bibitem{The_complete_Riesz_hom_preliminary} Buskes, G. and van Rooij, A. (1993). The vector lattice cover of certain partially ordered groups. {\it J. Aust. Math. Soc., Ser. A}, 54(3):352–367.

    \bibitem{Furtado27072021} S. Furtado, C. R. Johnson, and Y. Zhang. Linear preservers of copositive matrices. {\it Linear and Multilinear Algebra}, 69(10):1779-1788, 2021.


    \bibitem{GOWDA20133862} M. S. Gowda, R. Sznajder, and J. Tao. The automorphism group of a completely positive cone and its lie algebra. {\it Linear Algebra Appl.}, 438(10):3862–3871, 2013.

    \bibitem{copo-survey} J.-B. Hiriart-Urruty and A. Seeger. A variational approach to copositive matrices. {\it SIAM Review}, 52(4):593–629, 2010.

    \bibitem{Anke_book} A. Kalauch and O. van Gaans. {\it Pre-Riesz spaces}, volume 66 of {\it De Gruyter Expo. Math.} Berlin: De Gruyter, 2019.

    \bibitem{Lim01021990} M. H. Lim. Linear mappings on second symmetric product spaces that preserve rank less than or equal to one. {\it Linear and Multilinear Algebra}, 26(3):187–193, 1990.

    \bibitem{raickwade2026invertiblepositivemapsautomorphism}  Raickwade, P. and Sivakumar, K. C. (2026). Invertible positive maps that are not automorphism. Preprint. 

    \bibitem{shitov2019linearmappingspreservingcopositive} Y. Shitov. Linear mappings preserving the copositive cone. {\it Proc. Amer. Math. Soc.},149(8):3173–3176, 2021.

    \bibitem{Haandel+1993} M. van Haandel. {\it Completions in Riesz space theory}. PhD thesis, University of Nijmegen, 1993.    

    \bibitem{van_Imhoff} H. van Imhoff. Riesz* homomorphisms on pre-Riesz spaces consisting of continuous functions. {\it Positivity}, 22(2):425–447, 2018.
\end{thebibliography}
\end{document}